\numberwithin{equation}{section}
\newcommand{\bbE}{\mathbb{E}}
\newcommand{\bbR}{\mathbb{R}}
\newcommand{\bbP}{\mathbb{P}}
\newcommand{\bbZ}{\mathbb{Z}}
\newcommand{\cN}{\mathcal{N}}
\newcommand{\cO}{\mathcal{O}}
\newcommand{\cS}{\mathcal{S}}
\newcommand{\cF}{\mathcal{F}}
\newcommand{\cJ}{\mathcal{J}}
\newcommand{\tbx}{\tilde{\bm{x}}}
\newcommand{\tbp}{\tilde{\bm{p}}}
\def\hat{\widehat}
\def\tilde{\widetilde}
\def\HMC{\mathsf{HMC}}
\newtheorem{theorem}{Theorem}[section]
\newtheorem{lemma}{Lemma}[section]
\newtheorem{remark}{Remark}[section]
\newtheorem{assumption}{Assumption}[section]
\numberwithin{equation}{section}
\title{A splitting Hamiltonian Monte Carlo method for efficient sampling}
\date{}
\author[1,2,3]{Lei Li\thanks{E-mail: leili2010@sjtu.edu.cn}}
\author[1,2,3,4]{Lin Liu\thanks{E-mail: linliu@sjtu.edu.cn}}
\author[1]{Yuzhou Peng\thanks{E-mail: pengyz@sjtu.edu.cn}}
\affil[1]{School of Mathematical Sciences, Shanghai Jiao Tong University, Shanghai, 200240, P. R. China}
\affil[2]{Institute of Natural Sciences and MOE-LSC, Shanghai Jiao Tong University, Shanghai, 200240, P. R. China}
\affil[3]{Shanghai Artificial Intelligence Laboratory, Shanghai, P. R. China}
\affil[4]{CMA-Shanghai and SJTU-Yale Joint Center for Biostatistics and Data Science, Shanghai Jiao Tong University, Shanghai, 200240, P. R. China}
\begin{document}

\maketitle

\begin{abstract}
We propose a splitting Hamiltonian Monte Carlo (SHMC) algorithm, which can be computationally efficient when combined with the random mini-batch strategy. 
By splitting the potential energy into numerically nonstiff and stiff parts, one makes a proposal using the nonstiff part of $U$, followed by a Metropolis rejection step using the stiff part that is often easy to compute. The splitting allows efficient sampling from systems with singular potentials (or distributions with degenerate points) and/or with multiple potential barriers.
In our SHMC algorithm, the proposal only based on the nonstiff part in the splitting is generated by the Hamiltonian dynamics, which can be potentially more efficient than the overdamped Langevin dynamics. We also use random batch strategies to reduce the computational cost to $\mathcal{O}(1)$ per time step in generating the proposals for problems arising from many-body systems and Bayesian inference, and prove that the errors of the Hamiltonian induced by the random batch approximation is $\mathcal{O}(\sqrt{\Delta t})$ in the strong and $\mathcal{O}(\Delta t)$ in the weak sense, where $\Delta t$ is the time step. Numerical experiments are conducted to verify the theoretical results and the computational efficiency of the proposed algorithms in practice.  
\end{abstract}

\allowdisplaybreaks
\section{Introduction}
\label{sec:intro}

Markov chain Monte Carlo (MCMC) \cite{roberts2004general, metropolis1953equation, hastings1970monte, geman1984stochastic, swendsen1986replica, benettin1994hamiltonian} methods are nowadays routinely used in a variety of scientific computing problems, including computing statistics for many-body systems \cite{allen1987, frenkel2001understanding}, sampling from log-concave distributions \cite{lovasz1993random, dalalyan2017theoretical, lee2018convergence, chen2020fast}, parameter estimation in Bayesian statistics \cite{geman1984stochastic, banerjee2014hierarchical, green1995reversible,smith2013sequential} and Bayesian inverse problems \cite{giordano2020consistency, nickl2020polynomial}, just to name a few. Among MCMC methods, Hamiltonian Monte Carlo (HMC) \cite{duane1987hybrid, neal2011mcmc, betancourt2017conceptual, barbu2020hamiltonian} has recently garnered a lot of attention in practice due to its scalability and efficiency in high-dimensional settings \cite{neal2011mcmc, beskos2013optimal}. Nonetheless, there are several situations where HMC can encounter difficulties. The first such scenario might be sampling from the Gibbs distribution
\begin{gather}\label{eq:Gibbs-U}
\mu(q) \propto \exp\left[-\beta U(q)\right]
\end{gather}
of a many-body interacting particle system, where $\beta > 0$ is the (dimensionless) inverse temperature. It takes $\cO(N^2)$ operations to compute the total potential energy
\begin{gather}\label{eq:manybodypotential}
U(q) = \sum_{i = 1}^N w_i V(q_i) + \sum_{i,j:i<j} w_i w_j \phi(q_i-q_j), 
\end{gather}
where $q_i\in\bbR^d$ is the position of the $i^{th}$ particle and $w_i$ denotes the weight.
If one moves one particle per step, which is preferred in some applications \cite{frenkel2001understanding,li2020random, ding2021random}, the computational cost of evolving the Hamiltonian dynamics and the Metropolis-Hastings correction step in HMC are both $\cO(N)$. This fact makes HMC computationally expensive when sampling from the Gibbs distribution. 
Moreover, interaction potentials $\phi$ such as the Coulomb potential or the Lennard-Jones potential are usually singular \cite{frenkel2001understanding}. Singularity in $\phi$ can introduce stiffness to the Hamiltonian system, which makes the numerical simulations difficult \cite{wanner1996solving}, and possibly leads to low acceptance rates \cite{li2020random}, thus deteriorating the sampling efficiency of HMC. 

As for another well-known example, let us consider Bayesian inference of a parameter $\theta$ based on its posterior distribution $p_{\mathrm{post}} (\theta | \mathcal{D}_N)$ given the observed data $\mathcal{D}_{N}$
\begin{gather}\label{eq:post}
p_{\mathrm{post}} (\theta | \mathcal{D}_N)  \propto p_{\mathrm{prior}} (\theta) \prod\limits_{i = 1}^N p (y_i; \theta),
\end{gather}
where $\mathcal{D}_N = \{y_1, \ldots, y_N\}$ is a sample of size $N$ drawn from the probabilistic model $p (\cdot; \theta)$. 
When $p (\cdot; \theta)$ is posited to be a mixture model, which is often the case in clustering and density estimation problems \cite{fraley2002model}, the corresponding posterior distribution $p_{\mathrm{post}} (\theta | \mathcal{D}_N)$ may be multimodal. One main reason for multimodality in mixture models is the nonidentifiability of the parameters due to label switching \cite{jasra2005markov}. When the target distribution (the posterior distribution in this case) is multimodal, most MCMC algorithms have difficulty moving between isolated modes and are therefore prone to generate biased samples. As a result, HMC could fail to explore the entire state space and lead to even worse performance than the simple Random Walk Metropolis (RWM) algorithm \cite{mangoubi2018does}. Indeed, the Hamiltonian simulation heavily relies on the gradient information of the potential $U$, easily resulting in samples trapped in one single well of $U$ when $U$ has multiple modes.
A popular strategy of sampling from multimodal distributions is running multiple chains with over-dispersed initializations in parallel. Other methods include parallel tempering \cite{geyer1991markov, miasojedow2013adaptive}, simulated tempering \cite{marinari1992simulated}, Wormhole HMC \cite{lan2014wormhole} and the Wang-Landau algorithm \cite{wang2001determining, wang2001efficient, dai2020wang}. 

In order to deal with the numerical stiffness in the potential for the Gibbs distributions of many-body particle systems and multi-modal distributions, like the singularity in Lennard Jones potential and the deep wells in the multi-modal distributions mentioned above, we adopt the idea of potential splitting  \cite{sexton1992hamiltonian,hetenyi2002multiple,neal2011mcmc, shahbaba2014split,li2020random} for this numerical challenge. Compared to the existing potential splitting for HMC \cite{neal2011mcmc, shahbaba2014split}, our motivation is different in the sense that we aim to separate out the numerically stiff part so that some standard methods can be used for generating the proposal samples. 
This motivation is similar to the Random Batch Monte Carlo (RBMC) method \cite{li2020random}, but we choose to use the Hamiltonian dynamics to generate the samples for possible shorter mixing time. This leads us to a novel splitting Hamiltonian Monte Carlo (SHMC) method. In particular, consider a target distribution $\rho$ with the effective potential energy:
\begin{gather}\label{eq:totalenergy}
U = -\beta^{-1}\log \rho + const.
\end{gather}
After splitting the potential $U$ into two parts $U = U_1 + U_2$, a candidate state is first proposed by evolving the Hamiltonian system corresponding to $U_1$ and then is probabilistically accepted by a Metropolis rejection step determined by $U_2$. 
By combining with the random mini-batch strategy, the SHMC method could address the above issues regarding computational costs, singularity and multimodality to some extent.
The benefits of our SHMC method can be summarized as follows:
\begin{enumerate}[(a)]
\item  When sampling from the Gibbs distribution \eqref{eq:Gibbs-U} of an interacting particle system, particularly with singular interaction, we decompose $U$ into a nonstiff part $U_1$ and a part $U_2$ with short-range interactions.
The nonstiffness of $U_1$ guarantees the feasibility of the Hamiltonian simulation while $U_2$ reduces the computational cost of the Metropolis correction step since only particles within a small neighborhood need to be considered when calculating the acceptance probability. 

\item There is often a big summation term in the potential energy \eqref{eq:totalenergy}; e.g. see equation \eqref{eq:manybodypotential}. In this paper, we apply the random mini-batch idea \cite{robbins1951stochastic, jin2018random} to further reduce the computation load due to summing over many terms, leading to more efficient algorithms. In particular, if only one randomly picked particle is updated \cite{frenkel2001understanding} in each iteration, the cost can be reduced to $\cO(1)$ per iteration. 

\item As for a multimodal distribution, we choose $U_1$ with a relatively flat landscape to enable the Markov chains traveling between isolated wells. Such $U_1$ can be constructed by either reducing the height of the barriers between different wells of $U$ or filling the wells with ``computational sand'', an idea borrowed from the metadynamics approach \cite{laio2002escaping} (for further details, see Remark \ref{rem:meta}).

\item As we shall see later, our proposed SHMC method can possibly have shorter mixing time compared to the Langevin-based methods like the RBMC method.
\end{enumerate}

We remark that the random batch strategy has recently been applied to interacting particle systems, resulting in several efficient algorithms \cite{jin2018random,li2020random, li2020stochastic, jin2020randomQMC, jin2020randombatchEwald}. 
Different from the mini-batch in stochastic gradient descent (SGD) \cite{robbins1951stochastic, bottou1998online, bubeck2015convex} and stochastic gradient Hamiltonian Monte Carlo (SGHMC) \cite{chen2014stochastic}, the random batch methods for interacting particle systems aim to grasp the dynamics as well. The rationale is a time averaging effect, by which RBM methods converge in a ``law of large number'' manner in time \cite{jin2018random, jin2021random}. Introducing random batch strategy into the Hamiltonian dynamics, however, will introduce extra effective noise, and the Hamiltonian is no longer conserved and long time simulation is not accurate.
Nonetheless, in our proposed method, the Hamiltonian dynamics is never performed for a long time so this issue is nonessential in this paper; see Section \ref{subsec:error} for the error estimates.

The rest of this paper is organized as follows. 
In Section \ref{sec:SHMC}, we propose our algorithm and provide some motivating applications. 
Specifically, we first elaborate the general framework of the splitting Monte Carlo method. Then we incorporate HMC into the general framework and present the splitting Hamiltonian Monte Carlo (SHMC) algorithm in Section \ref{subsec:SHMC}. 
Finally, taking interacting particle systems and Bayesian inference problems as examples, we further introduce the random batch idea to reduce computational cost in Section \ref{subsec:RB-SHMC}. 
In Section \ref{sec:Theory&discussion}, we prove an error bound of the value of the approximate Hamiltonian generated in random batches and show the advantage of our method over sampling algorithms based on overdamped Langevin equations. 
Finally, we perform some numerical experiments to verify the high numerical and computational efficiency of our algorithm in Section \ref{sec:Numerical examples}.

\section{The Splitting Hamiltonian Monte Carlo method and its random batch variant}\label{sec:SHMC}

In this section, we propose the Splitting Hamiltonian Monte Carlo (SHMC) method to overcome the aforementioned difficulties that HMC could encounter in practice. To make SHMC scalable to large-scale problems such as many-body systems and Bayesian inference, we also incorporate the idea of random mini-batch into SHMC. We abbreviate the resulting algorithms {\it RB-SHMC}.

To start with, let us introduce the general framework of the splitting Monte Carlo method; see Algorithm \ref{alg:generalSMC}.
As mentioned in Section \ref{sec:intro}, common MCMC methods such as RWM and HMC behave poorly when sampling from the Gibbs distribution of interacting particle systems with singular potentials and sampling from multimodal distributions that often arise in Bayesian inference of mixture models. 
In both cases, there is some stiff part in the potential $U$, so the traditional gradient-based MCMC methods fail to sample from the target distribution efficiently. 
To alleviate this problem, we first construct a surrogate potential $U_1$ without stiffness and apply
 some generic sampling method $\cS$ satisfying detailed balance with respect to (w.r.t.) the Gibbs distribution $\rho_1 \propto \exp(-\beta U_1)$. Then one obtains a proposed state which is a sample from $\rho_1$ and may be severely biased from the target distribution $\rho \propto \exp(-\beta U)$. 
In the second step, a Metropolis rejection step using $U_2:=U-U_1$ is required to determine whether the aforementioned proposal shall be accepted as a new sample, thereby correcting the bias introduced in the first step. 

The splitting Monte Carlo (Algorithm \ref{alg:generalSMC}) can be viewed as a special case of the Metropolis-Hastings algorithm. Denote the transition probability of $\cS$  from state $x$ to state $y$ as $q(y; x)$. The acceptance probability in the Metropolis rejection step is then given by: 
\begin{gather}\label{eq:acc prob}
A(y; x) := \min\left(1, \frac{\exp[-\beta U(y)]q(x; y)}{\exp[-\beta U(x)]q(y; x)}\right),
\end{gather}
and the detailed balance condition of $\cS$ yields: 
\begin{gather}\label{eq:q(x; y)/q(y; x)}
\frac{q(x; y)}{q(y; x)} = \frac{\exp[-\beta U_1(x)]}{\exp[-\beta U_1(y)]}.
\end{gather}
Substitute identity \eqref{eq:q(x; y)/q(y; x)} into \eqref{eq:acc prob}, one can see that the acceptance probability is only related to the $U_2$ component of $U$:
\begin{gather}
\begin{split}
A(y; x) & = \min\left(1, \frac{\exp[-\beta U(y)]\exp[-\beta U_1(x)]}{\exp[-\beta U(x)]\exp[-\beta U_1(y)]}\right) \\
 & = \min\left(1, \exp\big[-\beta\big(U_2(y) - U_2(x)\big)\big]\right).
\end{split}
\end{gather}

\begin{algorithm}[H]
\caption{(General Splitting Monte Carlo algorithm)}\label{alg:generalSMC}
\begin{algorithmic}[1]
\State \textbf{Step 1} --- Propose a candidate state $x^*$ using some generic sampling method $\cS$ satisfying detailed balance w.r.t. the Gibbs distribution corresponding to $U_{1}$:
\begin{gather}
\exp\left[-\beta U_1(x)\right] q(x; x^*)= \exp\left[-\beta U_1(x^*)\right] q(x^*; x).
\end{gather}

\State \textbf{Step 2} --- Set $x \leftarrow x^*$ with probability:
\begin{gather}
A(x^*; x) := \min\left(1, \exp\left[-\beta\big(U_2(x^*) - U_2(x)\big)\right]\right).
\end{gather}
Otherwise, $x \leftarrow x$ remains unchanged. 
\end{algorithmic}
\end{algorithm}

\subsection{Splitting Hamiltonian Monte Carlo method}\label{subsec:SHMC}

We now present a splitting Monte Carlo method based on HMC, coined as the Splitting Hamiltonian Monte Carlo (SHMC) method, which evolves a Hamiltonian system to make a proposal in \textbf{Step 1} of Algorithm \ref{alg:generalSMC}. 

\begin{algorithm}[H]
\caption{(Splitting Hamiltonian Monte Carlo algorithm)}\label{alg:SHMC}
\begin{algorithmic}[1]
\State Split $U:=U_1+U_2$ such that $U_1$ is nonstiff and the remaining part $U_2$ contains the numerically stiff part of $U$ and can be computed efficiently.
Randomly generate the initial position $x^{(0)}$ and set $N_s$ as the total number of samples.
\For {$n = 1, \cdots, N_s$}
	\State Randomly pick $x_i$ with uniform probability. Sample a momentum $p_i \sim \cN\left(0, \frac{m}{\beta} I_d\right)$ and set $L_n \geq 1, \Delta t_n > 0$.
	\State $(x_i^*, p_i^*) \leftarrow (x_i^{(n-1)}, p_i)$. 
	\For {$\ell = 1,\cdots, L_n$}
		\State Perform the following leapfrog discretization
			\begin{gather}\label{eq:SHMC 2-L}
 			\begin{split}
 				& p_i^* \leftarrow p_i^* - \frac{\Delta t_n}{2} \nabla U_1(x_i^*); \\
 				& x_i^* \leftarrow x_i^* + \Delta t_n \frac{p_i^*}{m}; \\
 				& p_i^* \leftarrow p_i^* - \frac{\Delta t_n}{2} \nabla U_1(x_i^*).
 			\end{split}
 			\end{gather} 	
	\EndFor 
	 \State Evaluate the following Metropolis acceptance probability:
     	\[
      		A\leftarrow A(x_i^*;x_i)=\min\left(1, \exp\left[-\beta\left(U_2(x_i^*)-U_2(x_i^{(n-1)})\right)\right]\right).
      	\]
      \State Generate a random number $\zeta$ from uniform distribution on $[0, 1]$. If $\zeta\le A$, set
      	\[
      		x_i^{(n)} \leftarrow x_i^*.
      	\]
      	Otherwise, set
      	\[
      		x_i^{(n)} \leftarrow x_i^{(n-1)}.
      	\]
\EndFor  
\end{algorithmic}
\end{algorithm}

We first recall some fundamental properties of the standard HMC, which obviously satisfies the detailed balance condition. HMC generates samples from the $D$-dimensional target distribution \eqref{eq:Gibbs-U} by introducing an auxiliary variable $p \in \bbR^D$, namely the momentum, and sampling from the joint distribution $\tilde{\mu}(x, p) \propto \exp[-\beta H(x, p)]$. Traditionally, the Hamiltonian $H(x, p) = U(x) + K(p)$ is separable with $K(p) = \|p\|^2/(2m)$, where $m$ can be interpreted as ``mass'' in physics term.
Hence, $p$ is sampled from the isotropic multivariate Gaussian distribution $\cN\left(0,\frac{m}{\beta} I_D\right)$.
The Hamiltonian system satisfies many nice properties such as conservation of energy, reversibility, and symplecticity \cite{barbu2020hamiltonian, neal2011mcmc}, and therefore it is usually discretized by the leapfrog method, a symmetric symplectic integrator \cite{leimkuhler2004simulating}. 
HMC proposes a new state in the phase space by simulating a Hamiltonian dynamics. 
The proposed momentum is flipped after the numerical simulation to guarantee the time reversibility and a correction step follows to correct the discretization error induced by the leapfrog integrator. 
In practice, the momentum flipping step is skipped since $K$ is symmetric and the momentum should be resampled to ensure ergodicity. 
However, frequent resampling may result in random walk and thus multiple leapfrog steps are performed per iteration. 

Sharing the same algorithmic structure as HMC but differing in some specific steps, SHMC numerically evolves a Hamiltonian system with a surrogate potential $U_1$ to propose a candidate state $(x^*, p^*)$ and carries out a Metropolis rejection step only using $U_2 := U - U_1$ after the momentum flipping step.  
The same as HMC, flipping $p^*$ ensures time reversibility but is usually omitted since the rejection step has nothing to do with the momentum and it should also be resampled in the next iteration to ensure ergodicity. Note that the candidate state $(x^*, p^*)$ is proposed based on the surrogate potential $U_1$ and may not be a typical sample from the target distribution. Therefore, the rejection step for $U_{2}$ is critical to guarantee the empirical distribution of the samples to converge to the target distribution, beyond the sole purpose of correcting the discretization error as in HMC. 

\begin{remark}
\label{rmk:error}
Recall that since the Hamiltonian system is numerically solved by the leapfrog integrator, which is symplectic and has a second-order accuracy, the discretization error would not be a great issue. 
Moreover, the method will have negligible systematic error if the step size $\Delta t_n$ gradually decreases to zero and the Metropolis correction step in HMC to correct the errors introduced by the time discretization can be omitted to reduce computational cost. A related error bound can be found in Theorem \ref{thm:E DeltaH} below. We also refer interested readers to the experiments in Section 5 of \cite{welling2011bayesian} for further empirical evidence that decreasing step size helps to eliminate the systematic error. 
\end{remark}

For high-dimensional problems, especially the interacting particle systems, randomly picking a few entries of $x$ to update per iteration can be more efficient \cite{frenkel2001understanding}. In the following, we assume $x = [x_1, \ldots, x_N] \in \bbR^{d \times N}$ and only $x_i$, one randomly chosen entry of $x$, is updated per iteration. 
The detailed procedure of our SHMC method is given in Algorithm \ref{alg:SHMC} and the splitting schemes are presented in Section \ref{subsec:RB-SHMC} below. 

\subsection{Splitting Hamiltonian Monte Carlo method with random batch}\label{subsec:RB-SHMC}

In this section, we discuss how to apply the random batch idea \cite{robbins1951stochastic} to SHMC for problems with big summation in the potential $U$.
Typical examples include the Gibbs distribution arising from interacting particle systems and the posterior distribution from Bayesian inference problems.
For these examples, any algorithm that needs to evaluate the full summation, such as the traditional Metropolis Hastings algorithm and the standard HMC, will be computationally inefficient.

The random batch idea is originated from the famous SGD algorithm \cite{robbins1951stochastic, bottou1998online} and the stochastic gradient Langevin dynamics (SGLD) for Bayesian inference \cite{welling2011bayesian}. The random mini-batch based on random grouping strategy has recently been applied to many-body interacting systems \cite{jin2018random,li2020stochastic,jin2020randomsecondorder, jin2021convergence, ye2021efficient}. Recently, a novel efficient molecular dynamics simulation method by building random batch and importance sampling into the Fourier space of Ewald sum has been proposed in \cite{jin2020randombatchEwald}.
In a nutshell, the random batch strategy approximates a big summation by summing over only a small random subset of size $\cO(1)$ and therefore the complexity per time step is reduced to $\cO(1)$ \cite{jin2018random, li2020random}. The convergence of the random batch lies in the time average behavior and it can be intuitively interpreted as an extension of the ``law of large number'' from ``over samples'' to ``over time''.

\begin{remark}\leavevmode
\label{rmk:error2}
\begin{enumerate}[(a)]
    \item Introducing random batch strategy into the Hamiltonian dynamics will introduce extra noise and lead to the so-called ``numerical heating'' effects \cite{jin2021random}. The Hamiltonian is no longer conserved and the long-run simulation will not be accurate. However, in either HMC or SHMC, the Hamiltonian dynamics are only performed for a short time so this issue is nonessential; see Section \ref{subsec:error} for error bound estimates. 
    \item The error in the Hamiltonian introduced by random batch, albeit small as proved in Section \ref{subsec:error}, will necessarily introduce systematic errors in the invariant measures of the Markov chain. One may correct this using a Metropolis rejection step as in the Metropolis-adjusted Langevin algorithm (MALA) \cite{benettin1994hamiltonian, roberts1996exponential}. However, we choose not to do this simply because such a correction will bring back an $\cO(N)$ computation cost that we have tried to avoid using random batch. As above in Remark \ref{rmk:error}, one may decrease $\Delta t$ to make the systematic error negligible.
\end{enumerate}
\end{remark}

In the following, we present two implementations of the RB-SHMC algorithm.

\subsubsection{Gibbs distribution of interacting particle system}

For the Gibbs distribution of a $N$-particle interacting system, the potential \eqref{eq:manybodypotential} comprises the confining potential $V$ and the interaction potential $\phi$.
Without loss of generality, we assume that the confining potential is smooth and the interaction potential is symmetric in the sense $\phi(q_i - q_j) = \phi(q_j - q_i)$.
We also assume $w_i \equiv w$ for simplicity. 
In practice, the interaction between particles is of long range and usually singular. 
It is thus recommended that $\phi$ is decomposed into a smooth part $\phi_1$ and a short-range part $\phi_2$. 
Hence, one has
\begin{gather}\label{eq:Usplittingips}
\begin{split}
& U_1(q) = w\sum_{i = 1}^N V(q_i) + w^2\sum_{i,j: i< j} \phi_1(q_i - q_j), \\
& U_2(q) = w^2\sum_{i,j: i< j} \phi_2(q_i - q_j),
\end{split}
\end{gather}
where $q = [q_1, \ldots, q_N]$ with $q_i \in \mathbb{R}^d$ denoting the position of the $i^{th}$ particle and $w$ represents the weight. 
Clearly, the short range nature of $\phi_2$ is essential to reducing the computational cost because only particles within that short range need to be counted and therefore $U_2$ can be efficiently evaluated using suitable data structures like cell lists \cite[Appendix F]{frenkel2001understanding}.

The Hamiltonian dynamics corresponding to $U_1$ defined in \eqref{eq:Usplittingips} are given by
\begin{gather}\label{eq:Hamiltonphi1}
 \begin{split}
  & \dot{q}_i =\frac{p_i}{m}, \\
  & \dot{p}_i = -\left(\frac{\nabla V(q_i)}{w(N-1)} + \frac{1}{N-1}\sum_{j: j\neq i}\nabla \phi_1(q_i-q_j)\right). 
\end{split}
\end{gather}
Note that we have done a time rescaling $t \leftarrow w\sqrt{N-1} t$ and redefined the momentum by $p \leftarrow p/(w\sqrt{N-1})$ so that the interacting force on the $i$-th particle is $\cO(1)$.
The step of resampling of $p$ in Algorithm \ref{alg:SHMC} now becomes
\[
p_i\sim \mathcal{N}\left(0, \frac{m}{w^2(N-1)}I_d \right).
\]
The acceptance rate is again given by $\min(1, \exp(-\Delta U_2))$.
\begin{remark}
Note that the time rescaling here is equivalent to setting $\tilde{U}_1=\sum_{i=1}^N\frac{V(q_i)}{w(N-1)}+\frac{1}{N-1}\sum_{i,j: i<j}\phi_1(q_i-q_j)$ and $\tilde{U}_2=\frac{1}{N-1}\sum_{i,j: i<j}\phi_2(q_i-q_j)$, and choosing $\beta=w^2(N-1)$ in Algorithm \ref{alg:SHMC}. The acceptance rate is then again based on $\beta \Delta \tilde{U}_2=\Delta U_2$.
\end{remark}
In the case of mean-field regime where $w\sim 1/N$, it becomes the usual ODE system for interacting particles in literature.

When random batch is applied, the total interacting force acting on $i$, which should be contributed by all the other particles $j \neq i$, is now approximated by that only from a random batch of particles and the Hamiltonian system is reduced to 
\begin{gather}\label{eq:partially coupled Hamiltonian system}
 \begin{split}
  & \dot{\tilde{q}}_i = \frac{\tilde{p}_i}{m}, \\
  & \dot{\tilde{p}}_i = -\left(\frac{\nabla V(\tilde{q}_i)}{w(N-1)} + \frac{1}{s}\sum_{j\in \xi}\nabla \phi_1(\tilde{q}_i-\tilde{q}_j)\right),
\end{split}
\end{gather}
where $\xi = \{\xi_1, \ldots, \xi_s\}$ is a random subset of $\{1, \ldots, N\} \setminus \{i\}$, bookkeeping the indices of the particles selected into the small batch. 

We remark that there exist other ways to implement the random batch strategy. For example, in the case of Coulomb interactions in a periodic box, the importance sampling in Fourier space is possible and the time scaling in \eqref{eq:Hamiltonphi1} may be unnecessary \cite{jin2020randombatchEwald}. Even if we do random batch in real space as indicated above in the molecular dynamics regime where $w=1$, the time scaling to have $1/N$ factor in \eqref{eq:Usplittingips} is only a convenience made for establishing theoretical results. It will not change the intrinsic dynamics and is not necessary for implementation in practice \cite{jin2020randomsecondorder}.

\subsubsection{Multimodal posterior in Bayesian inference}

In Bayesian inference problems, the potential of the posterior is given by:
\begin{gather}
U(\theta) = \frac{1}{\beta} \left[-\log p_{\mathrm{prior}}(\theta) + \sum_{i = 1}^N -\log p(y_i; \theta)\right],
\end{gather}
where $\theta$ is the parameter of interest, $y_i, i = 1, \ldots, N$ are the observations and $\beta > 0$ is a scaling factor\footnote{Note that the $N$ here denotes the number of observed data and has nothing to do with the dimension of the variable $\theta$.}. 

In many cases, $p_{\mathrm{post}}$ is multimodal and thus there are multiple wells in the landscape of $U$. 
The energy barriers between wells frustrate general MCMC methods and lead to biased samples trapped in one single well.  
In this situation, one can flatten the landscape of $U$ to construct $U_1$ by adding ``computational sand'' $G$ into the wells and thus $U_2 = -G$. One feasible choice of $G$ is the Gaussian kernel.  

Here, we take the potential $U_1$ to be
\[
U_1(\theta)=-\log p_{\mathrm{prior}}(\theta) + \sum_{i = 1}^N -\log p(y_i; \theta)+G(\theta).
\]
As above, we do a time rescaling $t \leftarrow \sqrt{N} t$ and the momentum is redefined as $p \leftarrow p/\sqrt{N}$. Then, we can resample the momentum from the isotropic Gaussian $\mathcal{N}\left(0, \frac{m}{N}I_d \right)$, and the momentum update in the RB-SHMC is given by
\begin{gather}
p^* \leftarrow p^* - \Delta t_{\ell} \left[-\frac{1}{N}\nabla \log p_{\mathrm{prior}}(\theta^*) + \frac{1}{s}\sum_{i \in \xi_{\ell}} -\nabla \log p(y_i; \theta^*) + \frac{1}{N}\nabla G(\theta^*)\right],
\end{gather}
where $\xi_{\ell} = \{\xi_{\ell,1}, \ldots, \xi_{\ell, s}\} \subset \{1, \ldots, N\}$ denotes the random batch of data chosen in the ${\ell}^{th}$ step of the current iteration to approximate the likelihood of the entire dataset. The acceptance rate is simply $\min(1, \exp( \Delta G))$.

The time rescaling here is again equivalent to choosing $\beta=N$ , $U_1=-\frac{1}{N}\log p_{\mathrm{prior}}(\theta) + \frac{1}{N}\sum_{i = 1}^N -\log p(y_i; \theta)+\frac{1}{N}G$ and $U_2=-\frac{1}{N}G$ in Algorithm \ref{alg:SHMC}. 
The goal of time rescaling or choosing $\beta = N$ is to scale the summation term by $1/N$ so that the variance of $\chi^{\ell} := \frac{1}{s}\sum_{i \in \xi_{\ell}} -\nabla \log p(y_i; \theta^*)  - \frac{1}{N}\sum_{i = 1}^N -\nabla \log p(y_i; \theta^*)$ is controlled independent of $N$. Indeed, choosing $\beta = N$ is only for convenience in the analysis and incurs no essential change in the physical interpretation \cite{jin2020randomsecondorder}. In practice, one may perform the updates directly by 
\[
p^* \leftarrow p^* - \Delta t_{\ell} \left[-\nabla \log p_{\mathrm{prior}}(\theta^*) + \frac{N}{s}\sum_{i \in \xi_{\ell}} -\nabla \log p(y_i; \theta^*) + \nabla G(\theta^*)\right].
\]

\begin{remark}
\label{rem:meta}
The idea of adding computational sand is inspired by metadynamics \cite{laio2002escaping}, which is informally described as ``filling the free energy wells with computational sand''.
In some special cases, the modes of the posterior, and thus the locations of the wells in $U$, can be roughly estimated by the modes of the marginal distributions. 
Then an amenable $G$ can be designed in advance. 
Such an example is presented in Section \ref{subsubsec:gMM} below. 
Generally, however, the modes of the marginals fail to imply the location of the modes of the joint distribution. Hence, one may dynamically add a sequence of standard Gaussian kernels at the positions already visited by the proposals of the Hamiltonian simulation. 
In this case, Metropolis rejection steps are skipped in the first few trial runs. 
Then one collects all the added Gaussian kernels and obtains $U_1$, which is $U$ plus the sum of these Gaussian kernels. 
Interested readers may refer to \cite{laio2002escaping} for more details. We leave the theoretical justification of this proposal to future work.
\end{remark}

\section{Theoretical properties of SHMC and RB-SHMC}\label{sec:Theory&discussion}

In this section, we provide some theoretical justifications for the SHMC and RB-SHMC algorithms. First, in Section \ref{subsec:L}, we give some informal justification on why SHMC could potentially be better than splitting Monte Carlo methods based on overdamped Langevin dynamics. Second, in Section \ref{subsec:error}, we provide error estimates of the values of the approximate Hamiltonian generated from RB-SHMC under some mild regularity conditions. These results give practitioners some guidance on when to expect SHMC or RB-SHMC to succeed in applications.

\subsection{Benefits of Hamiltonian dynamics in sampling compared to the overdamped Langevin equations}\label{subsec:L}

As is well known, the Langevin sampling \cite{besag1994comments, roberts1996exponential, dalalyan2017theoretical, raginsky2017non} is another popular sampling scheme for the Gibbs distribution. 
Consider the overdamped Langevin equation:
\begin{gather}\label{eq:overdamped}
	dX = -\nabla U(X)\, dt + \sqrt{\frac{2}{\beta}}\, dW,
\end{gather}
where $\beta > 0$ is a known constant and $W$ denotes the $d$-dimensional Wiener process. 
It converges exponentially to its invariant measure, the Gibbs distribution \eqref{eq:Gibbs-U}, under mild regularity conditions \cite{markowich2000trend}. The Langevin sampler discretizes the overdamped Langevin equation \eqref{eq:overdamped}, by the Euler-Maruyama scheme
\[
 X(t^{n+1}) = X(t^n) - \Delta t_n\nabla U(X(t^n)) + \sqrt{\frac{2\Delta t_n}{\beta}}z_n,
 ~~z_n \sim \mathcal{N}(0, I_d).
\]
Hence the Markov chain generated by the Langevin sampler also exhibits an exponentially fast convergence rate up to the discretization error of order $\cO(\Delta t)$ \cite{dalalyan2019user}. 

In this section, we point out several possible benefits of the HMC-based samplers over the overdamped Langevin equation-based samplers like the RBMC and SGLD algorithms. In fact, the RBMC method uses the overdamped Langevin equation for generating the proposals in \textbf{Step 1} of Algorithm \ref{alg:generalSMC}, as the overdamped Langevin equation satisfies the detailed balance condition w.r.t. this invariant measure determined by $U_1$.
This may provide some heuristic justification for why in certain cases RB-SHMC may be preferred compared to RBMC, such as the examples described in sections \ref{subsec:test examples} and \ref{subsec:DBm} below.

In the discussion here, we assume that the potential $U$ is smooth and for the sake of simplicity, there is no splitting so $U_1 \equiv U$. 
Recall that we perform $L_n$ leapfrog steps with time step size $\Delta t_n$ in the $n^{th}$ iteration, which means that we evolve a Hamiltonian system w.r.t. $(X_{i^{(n)}}, P_{i^{(n)}})$ for $t \in \left[T_{i^{(n)}}^{(n-1)},T_{i^{(n)}}^{(n)}\right)$. 
Assume the particle index updated in the $n^{th}$ iteration is $i \equiv i^{(n)}$, where we omit the superscript for convenience. Then we approximate the momentum $P_i(t')$ at time $t' \in(T_{i}^{(n-1)},T_{i}^{(n)})$ by
\begin{gather*}
P_i(t') = P_i(T_{i}^{(n-1)}) + \int_{T_{i}^{(n-1)}}^{t'} -\nabla U(X_i(\tau)) d\tau.
\end{gather*}
 and thus
 \begin{gather}\label{eq:HMCequiv}
 \begin{split}
 X_i(T_{i}^{(n)}) & = X_i(T_{i}^{(n-1)}) + \int_{T_{i}^{(n-1)}}^{T_{i}^{(n)}} m^{-1} P_i(t') dt' \\
   	& = X_i(T_{i}^{(n-1)}) + \frac{L_n\Delta t_n}{m}P_i(T_{i}^{(n-1)}) - \frac{1}{m}
	\int_{T_{i}^{(n-1)}}^{T_{i}^{(n)}}\int_{T_i^{(n-1)}}^{t'}\nabla U(X_i(\tau)) d\tau dt' \\
   							& = X_i(T_{i}^{(n-1)}) - \widehat{\Delta t}_n\widetilde{\nabla U} + \sqrt{\frac{2\widehat{\Delta t}_n}{\beta}}z_n,
 \end{split}
 \end{gather}
with 
\[
\widetilde{\nabla U} =\frac{2}{(L\Delta t_n)^2}\int_{T_{i}^{(n-1)}}^{T_{i}^{(n)}}\int_{T_i^{(n-1)}}^{t'}\nabla U(X_i(\tau)) d\tau dt'
\]
and 
$\widehat{\Delta t}_n = \frac{(L_n\Delta t_n)^2}{2 m}$ and $z_n\sim\cN(0,I_d)$. 

Roughly, the numerical evolution in the $n^{th}$ iteration of SHMC is nearly equivalent to the simulation of an overdamped Langevin equation for time $\widehat{\Delta t}_n$. Hence, the effective time step of the overdamped Langevin equation is longer than the evolution time $L_n\Delta t_n$ of the corresponding Hamiltonian system, provided $L_n\Delta t_n > 2 m$. This means that running the Hamiltonian system with $L_n\Delta t_n$ large has a longer effective dynamics so it might approximate the equilibrium faster than the overdamped Langevin dynamics. This property may be the reason why SHMC is preferred in some applications over RBMC (see examples \ref{subsec:test examples}-\ref{subsec:DBm} below). 
Moreover, if the Hamiltonian step in \eqref{eq:HMCequiv} is accurately solved, it then exactly satisfies the detailed balance 
w.r.t. the target distribution. Discretizing the Hamiltonian using leapfrog might yield $\cO(\Delta t^2)$ error, which is better compared to the Euler-Maruyama discretization of overdamped Langevin dynamics with error $\cO(\Delta t)$ if we use the same step size.

Another benefit of HMC is that it can take advantage of the symplectic integrators to conserve the Hamiltonian even when the system evolves for a long time. Usually, with fixed time step $\Delta t$, the discretization error would increase exponentially with the evolution time.  However, the symplectic integration can be carried out for $e^{\cO\left(1/\Delta t\right)}$ time steps such that the numerical trajectories would always oscillate around the exact Hamiltonian trajectories within this time interval \cite{benettin1994hamiltonian, hairer2006geometric}. This guarantees that the Hamiltonian can be conserved within a reasonable tolerance for a long time, which is highly desired for HMC-based algorithms as the invariant measure is related to the Hamiltonian directly.
Moreover, due to the time-reversibility and volume-preservation property of the leapfrog integrator, HMC exhibits better scalability compared to RWM and MALA in some cases. For instance, when the potential of the target distribution is a sum of i.i.d. terms such as \eqref{eq:manybodypotential} and satisfies mild regularity conditions, HMC can allow a larger time step $\Delta t = \cO(N^{-1/4})$, and thus requires less steps to traverse the state space, as the number $N$ of i.i.d. terms tends to infinity \cite{neal2011mcmc,beskos2013optimal}.

\subsection{Error estimates of the random batch method}\label{subsec:error}

In this section, we derive an error bound for the values of the Hamiltonian approximated by the random batch strategy in one iteration of RB-SHMC (the process between any two consecutive resampling steps of the momentum). We analyze how the random batch strategy affects the value of the Hamiltonian corresponding to $U_1$. 
Note that the error of the value of the Hamiltonian determines the deviation of the invariant measure so this estimate can provide us some insight on the impact of the random batch approximation on the invariant measures.

Without loss of generality, we denote the beginning of an iteration as $t = 0$, and consider the following Hamiltonian system 
\begin{gather}\label{eq:Hamilton_psi}
\begin{split}
\dot{\bm{x}}&=\frac{\bm{p}}{m},\\
\dot{\bm{p}}&=-\nabla V_1(\bm{x})+\frac{1}{N_J}\sum_{j \in \cJ} \nabla \psi_j(\bm{x})
\end{split}
\end{gather}
and its counterpart after applying random batch 
\begin{gather}\label{eq:Hamilton_psi_rb}
\begin{split}
\dot{\tbx}&=\frac{\tbp}{m},\\
\dot{\tbp}&=-\nabla V_1(\tbx)+\frac{1}{s}\sum_{j \in \xi} \nabla \psi_j(\tbx). 
\end{split}
\end{gather}
Here $N_J$ is the size of the index set $\cJ$ and $\xi$ denotes a random subset of size $s$. 
We remark that introducing the notations $V_1$ and $\psi_j$ enables us to unify the Hamiltonian systems of the two examples discussed in Section \ref{subsec:RB-SHMC}. In particular, in the examples of interacting particle systems (Sections \ref{subsec:test examples}, \ref{subsec:DBm} and \ref{subsec:lj}), $V_1(\bm{x}) = V(\bm{x}) / (w (N-1))$ is a scaling of the confining potential of the current particle $i$ which is located at $\bm{x}$ and $\psi_j(\bm{x}) = \psi(\bm{x}; q_j) = -\phi_1(\bm{x}-q_j)$ is the interaction potential between particle $i$ and another particle $j \neq i$ located at $q_j$. When it comes to the example of the Bayesian posterior inference (Section \ref{subsubsec:gMM}), $V_1(\cdot) = (-\log p_{\mathrm{prior}}(\cdot) + G)/N$ is a scaling of the sum of the $\log$ prior density and the auxiliary Gaussian kernel added to flatten the landscape of the potential and $\psi_j$ corresponds to the $\log$-likelihood term $-\log p(y_j; \cdot)$.

We first clarify the notations used in the following analysis. 
Fix the number of leapfrog steps and the time step in each iteration to be $L$ and $\Delta t$ and denote $T := L\Delta t$. 
The initial values (i.e., the values at the beginning of one iteration) are denoted by $X^0$ and $P^{0}$.
Denote the random batch selected in the $\ell^{th}$ leapfrog step as $\xi_{\ell}, \ell = 1, \ldots, L$. 
The Kolmogorov extension theorem \cite{durrett2019probability} guarantees the existence of a probability space $(\Omega, \cF, \bbP)$ such that $\{X^0, P^{0}, \xi_{\ell}, \ell = 1, \ldots, L\}$ are independent random variables on this space. 
The $L^2(\Omega, \bbP)$ norm is denoted as $\|\cdot\| = \sqrt{\bbE|\cdot|^{2}}$, where $\bbE$ is the expectation with respect to the probability measure $\bbP$.

Now we start to estimate the difference of $H(\tbx(T), \tbp(T))$ and $H(\bm x(T), \bm p(T))$, where $(\bm x(\cdot), \bm p(\cdot))$ is the solution of the fully coupled Hamiltonian system \eqref{eq:Hamilton_psi} and therefore $H(\bm x(T), \bm p(T)) = H(\bm x(0), \bm p(0))$. 

For error bound estimates, we assume that the Hamiltonian is sufficiently smooth. 
\begin{assumption}\label{aspt:smoothness}
$V_1\in C^1$ and $\psi\in C^2$ with $\nabla V_1, \nabla \psi$ and $\nabla^2 \psi$ being bounded.
\end{assumption}

\begin{remark}
In cases where the actual confining potential has an unbounded gradient, one can still split $V$ into a bounded and an unbounded part so the unbounded part can be used in the Metropolis rejection step. However, this may not be needed in practice as the particles usually move in a bounded domain so the unboundedness of $V$ is not very relevant.
\end{remark}

\medskip
 The main results are as follows.
\begin{theorem}\label{thm:E DeltaH}
Under Assumption \ref{aspt:smoothness} and assume $\Delta t$ is sufficiently small such that $T(1+T^2)\Delta t \ll 1$. If
$\bm x(0) = \tbx(0)=X^0$, and $\bm p(0) = \tbp(0)=P^0$, and the fourth moment of the initial momentum $\mathbb{E}[|P^0|^4]$ is bounded, then the error from using random batches can be bounded above as follows:
\begin{gather}\label{eq:strongDeltaH}
 \left\|H(\tbx(T), \tbp(T)) - H(\bm x(T), \bm p(T))\right\| \leq C\sqrt{T(1+T^2)\Delta t}, 
 \end{gather}
and for any test function $\varphi \in \mathcal{C}_b^{\infty}$, 
\begin{gather}\label{eq:weakDeltaH}
\left|\mathbb{E}\left[\varphi\left(H(\tbx(T), \tbp(T))\right) - \varphi\left(H(\bm x(T), \bm p(T))\right)\right]\right| \leq C_{\varphi}T(1+T^2)\Delta t, 
\end{gather}
 where $C, C_{\varphi}>0$ are constants independent of $T$ and $N$ and $C_{\varphi}$ depends on $\varphi$. 
\end{theorem}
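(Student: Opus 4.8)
The plan is to control the difference of Hamiltonians by a telescoping-in-time argument over the $L$ leapfrog steps, tracking the error process $\bm e_\ell := (\tbx(t_\ell) - \bm x(t_\ell), \tbp(t_\ell) - \bm p(t_\ell))$ where $t_\ell = \ell \Delta t$. Since $H(\bm x(T),\bm p(T)) = H(X^0,P^0)$ by exact conservation along the fully coupled flow, it suffices to estimate $H(\tbx(T),\tbp(T)) - H(X^0,P^0)$. The natural decomposition is to write the increment of $H$ along the random-batch trajectory as a sum of a ``martingale-like'' fluctuation term coming from the batch noise $\chi_\ell(\bm x) := \frac{1}{s}\sum_{j\in\xi_\ell}\nabla\psi_j(\bm x) - \frac{1}{N_J}\sum_{j\in\cJ}\nabla\psi_j(\bm x)$ and a ``bias/remainder'' term that is genuinely $O(\Delta t)$ per step. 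The key structural facts are: (i) $\bbE[\chi_\ell \mid \mathcal F_{\ell-1}] = 0$, and (ii) $\bbE[|\chi_\ell|^2 \mid \mathcal F_{\ell-1}] \le \Lambda^2 \big(\frac1s - \frac1{N_J}\big) \lesssim 1$ under Assumption~\ref{aspt:smoothness}, where $\Lambda$ bounds $\nabla\psi$. Using these, one shows a Gr\"onwall-type estimate $\|\bm e_\ell\| \lesssim \sqrt{\ell}\,\Delta t \cdot \sqrt{1+T^2}$, i.e. $\|\bm e_L\| = O(\sqrt{T(1+T^2)\Delta t})$; the $(1+T^2)$ factor arises because a fluctuation injected at step $k$ propagates through the remaining Hamiltonian flow, and position errors accumulate a further factor of $T$ from integrating momentum errors.

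For the strong bound \eqref{eq:strongDeltaH}, I would first establish the $L^2$ stability estimate for $\bm e_\ell$ just described: expand $\bm e_{\ell+1}$ in terms of $\bm e_\ell$ plus the leapfrog update, split into the conditional-mean part (which is Lipschitz in $\bm e_\ell$ by boundedness of $\nabla V_1, \nabla^2\psi$, giving the $(1+C\Delta t)$ growth) and the conditional-fluctuation part (whose $L^2$ norm per step is $O(\Delta t)$, with cross terms vanishing under $\bbE[\cdot\mid\mathcal F_{\ell-1}]$). Squaring, taking expectations, and summing the geometric-type recursion gives $\|\bm e_L\|^2 \lesssim T(1+T^2)\Delta t$ once $T(1+T^2)\Delta t \ll 1$ keeps the exponential factor $e^{CT}$ bounded (note the hypothesis is slightly stronger than needed here but is what the weak bound wants). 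Then, because $H \in C^1$ with gradient that is at most linearly growing in $\bm p$ (as $K(\bm p)=|\bm p|^2/(2m)$) and bounded in $\bm x$, I write $H(\tbx(T),\tbp(T)) - H(\bm x(T),\bm p(T)) = \nabla H(\bm\zeta)\cdot \bm e_L$ for an intermediate point $\bm\zeta$, and apply Cauchy--Schwarz: $\|\nabla H(\bm\zeta)\cdot\bm e_L\| \le \|\,|\nabla H(\bm\zeta)|\,\|_{L^4}\,\|\bm e_L\|_{L^4}^{?}$ — more carefully, since $\nabla_{\bm p} H$ is linear in $\bm p$, one needs $\bbE|\tbp(T)|^4 < \infty$, which follows from $\bbE|P^0|^4<\infty$ together with the Gr\"onwall-type moment bounds on the random-batch flow (the batch increments have bounded conditional moments of all orders since $\nabla\psi$ is bounded). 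This is exactly why the fourth-moment hypothesis on $P^0$ appears; it upgrades the Cauchy--Schwarz step from needing $\|\bm e_L\|_{L^2}$ and $\||\nabla H|\|_{L^2}$ — which would already suffice if $|\nabla H|$ were bounded — to the setting with linearly growing $\nabla_{\bm p}H$.

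For the weak bound \eqref{eq:weakDeltaH}, the gain of a square root comes from the martingale structure: the leading $O(\sqrt{\Delta t})$ contribution to $\bm e_L$ is a sum of conditionally-centered increments, so when we Taylor-expand $\varphi(H(\tbx(T),\tbp(T))) - \varphi(H(\bm x(T),\bm p(T)))$ to second order, the first-order term $\varphi'(H)\nabla H\cdot\bm e_L$ has expectation that telescopes to $O(\Delta t)$ after one conditions step-by-step and uses $\bbE[\chi_\ell\mid\mathcal F_{\ell-1}]=0$ (the only surviving contributions are the genuine per-step biases, of which there are $L$, each $O(\Delta t^2)$ — wait, more precisely each $O(\Delta t^2)$ in the mean, summing to $O(T\Delta t)$ — plus correlation terms between the noise at step $k$ and the $\mathcal F_k$-measurable coefficient it multiplies, which are again $O(\Delta t^2)$ each). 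The second-order term $\frac12\varphi''(H)(\nabla H\cdot\bm e_L)^2 + \ldots$ is bounded by $\|\varphi''\|_\infty \bbE[(\nabla H\cdot\bm e_L)^2] \lesssim \|\bm e_L\|^2 \lesssim T(1+T^2)\Delta t$ using the strong estimate (and again the fourth moment of $P^0$ to handle $|\nabla_{\bm p}H|^2 \sim |\bm p|^2$ paired with $|\bm e_L|^2$ via Cauchy--Schwarz). Boundedness of $\varphi, \varphi', \varphi''$ (from $\varphi\in\mathcal C_b^\infty$) is what makes all constants depend only on $\varphi$ and not on $T, N$.

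\textbf{Main obstacle.} The delicate point is the martingale/telescoping bookkeeping in the weak estimate: one must carefully decompose the accumulated error $\bm e_L$ into a principal martingale part plus a remainder, show the remainder is $O(\Delta t)$ in $L^1$ (not just $O(\sqrt{\Delta t})$ in $L^2$), and verify that all cross-correlations between the step-$k$ batch noise and everything it gets multiplied by downstream are genuinely $O(\Delta t^2)$ per step — this requires that the downstream flow map's derivative is $\mathcal F_k$-measurable up to an $O(\Delta t)$ error and that the noise is conditionally independent of the past. Handling the $(1+T^2)$ growth factor correctly through the leapfrog recursion (position errors feeding on momentum errors feeding on accumulated noise) is the other place where one has to be careful rather than merely routine. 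The boundedness assumptions on $\nabla V_1, \nabla\psi, \nabla^2\psi$ are used precisely to keep the per-step Lipschitz constants uniform so the Gr\"onwall constant is $e^{CT}$ with $C$ independent of everything, and the smallness hypothesis $T(1+T^2)\Delta t \ll 1$ keeps $e^{CT}$ from spoiling the bound.
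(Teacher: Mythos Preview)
Your approach differs substantially from the paper's and has a genuine gap under the stated assumptions. The paper does \emph{not} track the phase-space error $\bm e_\ell$ at all; instead it works directly with the scalar $\Delta H(t) := H(\tbx(t),\tbp(t)) - H(\bm x(t),\bm p(t)) = H(\tbx(t),\tbp(t)) - H^0$ and exploits the identity
\[
\frac{d}{dt}\Delta H(t) \;=\; \frac{\tbp(t)}{m}\cdot\chi^{L_t}(t),
\]
which follows because the non-random part of the random-batch force is exactly $-\nabla_{\bm x} H$ and cancels against the $\dot{\tbx}$ contribution. This reduces the problem to a one-dimensional process driven purely by the batch noise; the strong and weak bounds then come from short differential-inequality arguments on $\|\Delta H\|^2$ and $\bbE[\varphi(H(\tbx,\tbp))]$ respectively, using only the trick of subtracting the value at the grid point $t_{L_t}$ (so that $\bbE[\chi^{L_t}(t_{L_t})\mid\mathcal F_{L_t-1}]=0$ kills the leading term) together with the fourth-moment bound on $\tbp$ from Lemma~\ref{lem:E P^q}. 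No Lipschitz control of the flow map, and in particular no bound on $\nabla^2 V_1$, is ever invoked.

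By contrast, your Gr\"onwall recursion on $\bm e_\ell$ requires the drift difference $\nabla V_1(\tbx)-\nabla V_1(\bm x)$ to be bounded by $C|\tbx-\bm x|$, i.e.\ that $\nabla^2 V_1$ be bounded; but Assumption~\ref{aspt:smoothness} only posits $V_1\in C^1$ with $\nabla V_1$ bounded, not Lipschitz. Without this extra hypothesis your phase-space error recursion does not close. Even granting that assumption, your mean-value step $H(\tbx,\tbp)-H(\bm x,\bm p)=\nabla H(\bm\zeta)\cdot\bm e_L$ combined with the linear growth $|\nabla_{\bm p} H|\sim|\bm p|$ forces you, via Cauchy--Schwarz, to control $\|\bm e_L\|_{L^4}$ rather than the $\|\bm e_L\|_{L^2}$ you actually estimate (you flag this yourself with the ``?''); this is fixable since $\chi$ is a.s.\ bounded, but it is extra work your sketch omits. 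The paper's direct route via the identity above sidesteps both issues entirely and is what makes the stated Assumption~\ref{aspt:smoothness} sufficient.
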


\begin{remark}
As mentioned in Remark \ref{rmk:error2}, the estimation of the error introduced by random batch guarantees that the systematic error of our method vanishes as $\Delta t_n \rightarrow 0$.
\end{remark}

We also remark that the bounded fourth moment conditions on the momentum $P^{0}$ are trivially satisfied as it usually sampled from a multivariate Gaussian distribution. In the proof, we need to define the filtration $\{\mathcal{F}_{\ell}\}_{1\leq \ell\leq L}$ by
\begin{gather}
\mathcal{F}_{\ell} := \sigma\left( X^0, P^0, \{\xi_k\}_{1\leq k \leq \ell} \right), 
\end{gather}
the $\sigma$-algebra generated by the initial values $X^0, P^0$ and the first $\ell$ random batches $\xi_k, k = 1, \ldots, \ell$. 
Obviously, $\mathcal{F}_{\ell+1} = \sigma\left(\mathcal{F}_{\ell} \cup \sigma\left(\xi_{\ell+1}\right)\right)$.
Denote the error of the random approximation of the summation term by:
\begin{gather}
\chi^{\ell}(t) := \frac{1}{s} \sum_{j\in\xi_{\ell}}\nabla \psi_j(\tbx(t)) - \frac{1}{N_J} \sum_{j \in \cJ}\nabla \psi_j(\tbx(t)).
\end{gather}
Clearly, $\chi^{\ell}$ is $\mathcal{F}_{\ell}$-measurable and for any $\ell\geq 0, q > 0$,
\begin{gather}
\sup_{t\in[t_{\ell}, t_{\ell+1})}\mathbb{E}\left[\left|\chi^{\ell}(t)\right|^q\middle|\mathcal{F}_{\ell}\right] \leq 2^q \|\nabla \psi\|_{\infty}^q. 
\end{gather}

With these, the following upper bound, which will be used in the proof of Theorem \ref{thm:E DeltaH}, can be easily obtained: 
\begin{lemma}\label{lem:E P^q}
Under conditions in Theorem \ref{thm:E DeltaH}, it holds that, 
\begin{gather}\label{eq:E P^q-global}
\sup_{0 \leq t\leq T}\mathbb{E}\left[ \left|\tbp(t)\right|^4 \right]\leq C(1 + T^4), 
\end{gather}
where $C>0$ is a constant independent of $N, s, \xi_{k}, k = 1, \ldots, L$.
\end{lemma}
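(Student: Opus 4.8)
The plan is to derive a closed Gr\"onwall-type bound for $\mathbb{E}[|\tbp(t)|^4]$ on a single iteration interval $[0,T]$. First I would write the integral form of the momentum equation in \eqref{eq:Hamilton_psi_rb}: for $t\in[0,T]$,
\[
\tbp(t) = P^0 + \int_0^t\left(-\nabla V_1(\tbx(\tau)) + \frac{1}{s}\sum_{j\in\xi(\tau)}\nabla\psi_j(\tbx(\tau))\right)d\tau,
\]
where $\xi(\tau)$ denotes the batch active at time $\tau$ (i.e.\ $\xi_\ell$ on the $\ell$-th leapfrog substep). Under Assumption \ref{aspt:smoothness}, $\nabla V_1$ is bounded by some constant and $\frac{1}{s}\sum_{j\in\xi}\nabla\psi_j$ is bounded by $\|\nabla\psi\|_\infty$ uniformly in the batch and in $\tbx$, so the entire drift is bounded by a deterministic constant $M$ pointwise. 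Hence $|\tbp(t)| \le |P^0| + Mt \le |P^0| + MT$ pathwise, and raising to the fourth power and taking expectations gives
\[
\sup_{0\le t\le T}\mathbb{E}[|\tbp(t)|^4] \le 8\,\mathbb{E}[|P^0|^4] + 8 M^4 T^4 \le C(1+T^4),
\]
using the elementary inequality $(a+b)^4 \le 8(a^4+b^4)$ and the assumed boundedness of $\mathbb{E}[|P^0|^4]$. The constant $C$ depends only on $M$ (hence on the bounds in Assumption \ref{aspt:smoothness}) and the moment bound on $P^0$, and in particular is independent of $N$, $s$, and the random batches $\xi_k$, as claimed. Note no Gr\"onwall argument is even needed here because the right-hand side of the $\tbp$ equation does not involve $\tbp$ itself; the coupling to $\tbx$ enters only through $\nabla V_1(\tbx)$ and $\nabla\psi_j(\tbx)$, both of which are uniformly bounded.

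The only mild subtlety, and the step I would be most careful about, is the bookkeeping that the drift is genuinely bounded \emph{uniformly over the random batch realizations}: one must observe that for any subset $\xi$ of size $s$, $|\frac{1}{s}\sum_{j\in\xi}\nabla\psi_j(\tbx)| \le \frac{1}{s}\cdot s\cdot\|\nabla\psi\|_\infty = \|\nabla\psi\|_\infty$, so the bound $M := \|\nabla V_1\|_\infty + \|\nabla\psi\|_\infty$ works deterministically, and hence the pathwise estimate holds on every $\omega\in\Omega$ before taking expectations. Everything else is routine; there is no real obstacle. One could alternatively phrase it via the filtration $\{\mathcal F_\ell\}$ and Minkowski's inequality substep by substep, but the pathwise route is cleaner. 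If one wished to track the dependence more sharply (e.g.\ to get the precise form needed downstream in Theorem \ref{thm:E DeltaH}), the bound $C(1+T^4)$ with $C$ absorbing $M^4$ and $\mathbb{E}[|P^0|^4]$ is exactly what is stated, so no further refinement is required.
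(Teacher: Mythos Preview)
Your proof is correct, and indeed slightly more elementary than the paper's own argument. Both proofs hinge on the same key observation---that the drift in the $\tbp$-equation is bounded uniformly by $M=\|\nabla V_1\|_\infty+\|\nabla\psi\|_\infty$ independent of the batch---but you exploit it directly via the pathwise estimate $|\tbp(t)|\le|P^0|+Mt$, then take fourth powers and expectations. The paper instead differentiates the conditional moment $\mathbb{E}\bigl(|\tbp(t)|^q\,\big|\,\mathcal F_L\bigr)$, bounds the derivative by $C_q\,\mathbb{E}\bigl(|\tbp(t)|^q\,\big|\,\mathcal F_L\bigr)^{(q-1)/q}$ via H\"older, integrates the resulting differential inequality to obtain $\mathbb{E}\bigl(|\tbp(t)|^q\,\big|\,\mathcal F_L\bigr)\le C_q(|P^0|^q+t^q)$, and then averages over $\mathcal F_L$ and sets $q=4$. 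Your route avoids the conditional-expectation bookkeeping and the differential inequality entirely, which is a genuine simplification; the paper's formulation has the minor cosmetic advantage of being written for general $q$ from the outset, but your argument extends to arbitrary $q$ just as readily.
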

The proof of this simple lemma is deferred to Appendix \ref{apdx:pf lem E P^q}. We are now able to provide the proof of the error estimates for the Hamiltonian.
\begin{proof}[Proof of Theorem \ref{thm:E DeltaH}]

Define
\begin{gather}
\Delta H(t) := H(\tbx(t), \tbp(t)) - H(\bm x(t), \bm p(t)) = H(\tbx(t), \tbp(t)) - H^0,
\end{gather}
where $H^0 = H(X^0, P^0)$ is the initial value of the Hamiltonian since $H$ is conserved under the evolution \eqref{eq:Hamilton_psi}. Consequently, it is straightforward to find
\begin{gather}
 \frac{d}{dt}\Delta H(t) = \frac{\tbp(t)}{m}\cdot \chi^{L_t}(t), 
\end{gather}
where 
\begin{gather}
L_t = \lfloor \frac{t}{\Delta t}\rfloor.
\end{gather}

Hence, 
\begin{gather}\label{eq:d/dt EDeltaH^2}
\begin{split}
&\frac{d}{dt} \|H(\tbx(t), \tbp(t)) - H(\bm x(t), \bm p(t))\|^2 
= 2\mathbb{E} \left[\Delta H(t)\frac{\tbp(t)}{m}\cdot \chi^{L_t}(t)\right] \\
&=  2\mathbb{E} \left[\left(\Delta H(t_{L_t}) + \frac{\tbp(\hat{t})}{m}\cdot \chi^{L_t}(\hat{t})(t - t_{L_t})\right)\frac{\tbp(t)}{m}\cdot \chi^{L_t}(t)\right] \\
&=:  I_1 + I_2,
\end{split}
\end{gather}
where $\hat{t} \in (t_{L_t}, t)$ by the mean value theorem for one variable functions. 

\textbf{Step 1} --- Estimation of $I_1$:

Note that $\Delta H(t_{L_t}) \in \mathcal{F}_{L_t-1}$ and $\tbp(t_{L_t}) \in \mathcal{F}_{L_t-1}$. Then one has
\begin{gather*}\mathbb{E}\left[\Delta H(t_{L_t}) \frac{\tbp(t_{L_t})}{m}\chi^{L_t}(t_{L_t})\right] 
=  \mathbb{E}\left[\Delta H(t_{L_t}) \frac{\tbp(t_{L_t})}{m}\cdot \mathbb{E}\left(\chi^{L_t}(t_{L_t})\middle|\mathcal{F}_{L_t-1}\right)\right]  = 0.
\end{gather*}
Therefore,
\begin{gather*}
\begin{split}
& \ \mathbb{E}\left[\Delta H(t_{L_t}) \frac{\tbp(t)}{m}\chi^{L_t}(t)\right]
= \mathbb{E}\left[\Delta H(t_{L_t}) \left(\frac{\tbp(t)}{m}\chi^{L_t}(t) - \frac{\tbp(t_{L_t})}{m}\chi^{L_t}(t_{L_t})\right)\right] \\
= & \ \mathbb{E}\left[\Delta H(t_{L_t}) \mathbb{E}\left(\frac{\tbp(t)}{m}\chi^{L_t}(t) - \frac{\tbp(t_{L_t})}{m}\chi^{L_t}(t_{L_t})\middle|\mathcal{F}_{L_t}\right)\right] \\
= & \ \mathbb{E}\left[\Delta H(t_{L_t}) \int_{t_{L_t}}^t \mathbb{E}\left(\frac{\dot{\tbp}(t')}{m}\chi^{L_t}(t') 
+ \frac{\tbp(t')}{m}\dot{\chi}^{L_t}(t')\middle|\mathcal{F}_{L_t}\right)\, dt'\right] \\
\leq & \ C(1+T^2)\Delta t\|\Delta H(t_{L_t})\|
\leq C(1+T^2)\Delta t \left(\|\Delta H(t)\| + \left\|\frac{\tbp(\hat{t})}{m}\chi^{L_t}(\hat{t})(t - t_{L_t})\right\| \right)\\
\leq & \ C(1+T^2)\Delta t\left(\|\Delta H(t)\| + C(1+T)\Delta t\right)
\leq C(1+T^2)\Delta t \|\Delta H(t)\|.
\end{split}
\end{gather*}

\textbf{Step 2} --- Estimation of $I_2$:
\begin{gather*}
\begin{split}
& \ \mathbb{E}\left[\frac{\tbp(\hat{t})}{m}\chi^{L_t}(\hat{t})\frac{\tbp(t)}{m}\chi^{L_t}(t)\right] \\
\leq & \ \left\{\mathbb{E}\left[\left|\frac{\tbp(\hat{t})}{m}\right|^4\right]\mathbb{E}\left[\left|\chi^{L_t}(\hat{t})\right|^4\right]\mathbb{E}\left[\left|\frac{\tbp(t)}{m}\right|^4\right]\mathbb{E}\left[\left|\chi^{L_t}(t)\right|^4\right]\right\}^{\frac{1}{4}} \leq C(1+T^2).\\
\end{split}
\end{gather*}
Therefore, $I_2$ is controlled by $C(1+T^2)\Delta t$. 

Combining the above two steps gives
\begin{gather}
\frac{d}{dt} \|\Delta H(t)\|^2 \leq C(1+T^2)\Delta t\left(\|\Delta H(t)\| + 1\right).
\end{gather}
Recall that $\Delta t$ is assumed to be sufficiently small. Then one has
\begin{gather}
\|H(\tbx(T), \tbp(T)) - H(\bm x(T), \bm p(T))\| \leq C\sqrt{T(1+T^2)\Delta t}. 
\end{gather}

Similarly,  \eqref{eq:weakDeltaH} follows from
\begin{gather*}
\begin{split}
& \ \left|\mathbb{E}\left[\varphi\left(H(\tbx(T), \tbp(T))\right) - \varphi\left(H(\bm x(T), \bm p(T))\right)\right]\right| \\
 = & \ \left|\int_0^T\mathbb{E}\left[\varphi'\left(H(\tbx(t), \tbp(t))\right)\frac{\tbp(t)}{m}\chi^{L_t}(t)\right]dt\right|\\
 = & \ \left|\int_0^T\mathbb{E}\left[\varphi'\left(H(\tbx(t), \tbp(t))\right)\frac{\tbp(t)}{m}\chi^{L_t}(t) - \varphi'\left(H(\tbx(t_{L_t}), \tbp(t_{L_t}))\right)\frac{\tbp(t_{L_t})}{m}\chi^{L_t}(t_{L_t})\right]dt\right| \\
\leq & \ \left|\int_0^T\int_{t_{L_t}}^t\mathbb{E}\left[\varphi''\left(H(\tbx(t'), \tbp(t'))\right)\left(\frac{\tbp(t')}{m}\chi^{L_t}(t')\right)^2\right]dt'dt\right| \\
& + \left|\int_0^T\int_{t_{L_t}}^t\mathbb{E}\left[\varphi'\left(H(\tbx(t'), \tbp(t'))\right)\left(\frac{\dot{\tbp}(t')}{m}\chi^{L_t}(t') + \frac{\tbp(t')}{m}\dot{\chi}^{L_t}(t')\right)\right]dt'dt\right| \\
\leq & \ (\|\varphi'\|_{\infty} + \|\varphi''\|_{\infty})CT(1+T^2)\Delta t. 
\end{split}
\end{gather*}

\end{proof}

\section{Numerical examples}\label{sec:Numerical examples}

In this section, we conduct some numerical experiments to demonstrate the computational efficiency of SHMC and RB-SHMC.
First, we give an artificial example with regular interactions to test the influence of the length of Hamiltonian dynamics in an iteration. 
In the second example, we simulate the Dyson Brownian motion for $N\gg 1$. 
The advantage of potential splitting and the high efficiency of random batch will manifest itself in this example, where the interaction is singular and of long range.
Third, we consider a distribution with double well potential to demonstrate the benefit of potential splitting even in regular interaction cases, and then consider a classical example in Bayesian inference: estimating the locations of a two-dimensional Gaussian mixture model using SHMC and RB-SHMC. Finally, we consider the Gibbs distribution of a three-dimensional interacting particle system, the interaction potential of which is modelled by the Lennard-Jones potential. This example, which is of higher dimension than previous examples, further demonstrates the benefit of RB-SHMC in practice.

\subsection{A test example}\label{subsec:test examples}

In the following toy example, we explore how $L_n$, the number of leapfrog steps in the $n^{th}$ iteration, can influence the efficiency of the RB-SHMC algorithm. 
For the purpose of a relatively fair comparison between algorithms with different evolution time per iteration, we define the evolution time $T_E$ up to the $M^{th}$ iteration of Algorithm \ref{alg:SHMC} by
\begin{gather}\label{eq:EvoTime}
	T_E(M) = \frac{1}{N}\sum_{n = 1}^M L_n\times \Delta t_n.
\end{gather}
The rescaling factor $1/N$ arises from the fact that Algorithm \ref{alg:SHMC} updates only one ``particle'' per iteration and thus on average $N$ iterations are needed to update all the variables. 

Consider the simple one-dimensional Langevin equation
\begin{equation}\label{eq:test example}
	\begin{array}{l}
		d X^{i}=P^{i} d t, \\
		d P^{i}=\left(-\alpha X^{i}+\frac{1}{N-1} \sum_{j: j \neq i} \frac{X^{i}-X^{j}}{1+\left|X^{i}-X^{j}\right|^{2}}-\gamma P^{i}\right)d t+\sqrt{\frac{2\gamma}{\beta}} d W^{i}.
	\end{array}
\end{equation}
Clearly, the interaction is smooth and bounded and its derivative is also bounded. 
The corresponding Gibbs distribution is
\begin{gather}
\mu \propto \exp\left[-\frac{\beta}{2}\left(\alpha \sum_i X_i^2 - \frac{1}{N-1}\sum_{i, j:i<j}\ln (1+|X_i-X_j|^2) + \sum_i P_i^2\right)\right].
\end{gather}

Consider a system of $N=500$ particles sampled from the uniform distribution on $[-10, 10]$. 
Since the interaction kernel is regular, we move all the particles simultaneously using $\phi_1 := \phi$ and omit the Metropolis rejection step since $\phi_2 = 0$. 
Choosing $\alpha = 1, \, \beta = 1$ and fixing the batch size $s =1$, the $\ell^{th}$ leapfrog step includes the following updates
\begin{gather*}
 	\begin{split}
 		& \bm p \leftarrow \bm p - \frac{\Delta t_n}{2} \left(\alpha \bm x - \frac{\bm x - \bm \eta_{\ell}}{1+|\bm x - \bm \eta_{\ell}|^2}\right); \\
 		& \bm x \leftarrow \bm x + \Delta t_n \bm p; \\
 		& \bm p \leftarrow \bm p - \frac{\Delta t_n}{2} \left(\alpha \bm x - \frac{\bm x - \bm \eta_{\ell}}{1+|\bm x - \bm \eta_{\ell}|^2}\right),
 	\end{split}
 \end{gather*}
 where the $i^{th}$ element of $\bm \eta_{\ell}$ is $\eta_{\ell,i} = x_{\xi_i}$, with $\xi_i \in \{1, 2, \ldots, N\}\setminus\{i\}$ for $i = 1, 2, \ldots, N$. 
 Fixing the time step $\Delta t_n \equiv 0.02$, we run RB-SHMC with $L_n \equiv 100$ and $L_n \equiv 10$ respectively for $T_E = 100$ units of evolution time to see how the efficiency can be influenced by different values of $L_n$. 

\begin{figure}[htb]
\centering
\begin{subfigure}[t]{0.32\textwidth}
\centering
\includegraphics[width=\textwidth]{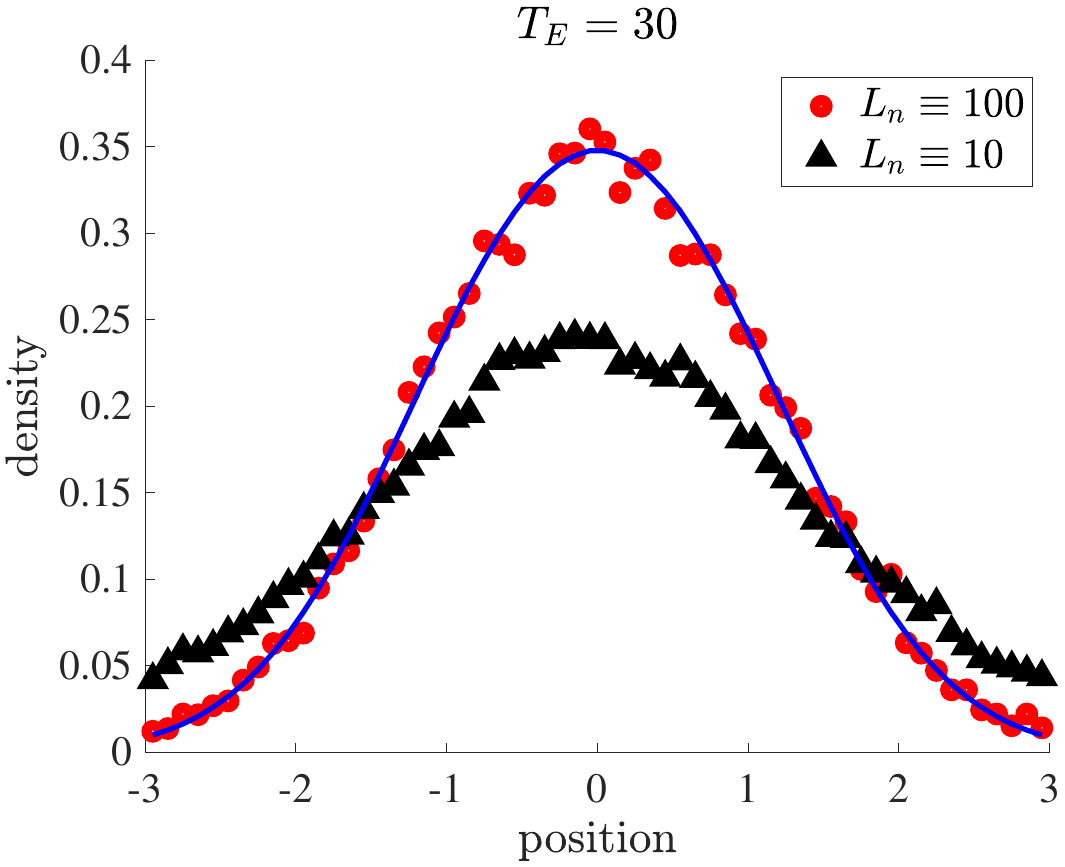}
\end{subfigure}
\hfill
\begin{subfigure}[t]{0.32\textwidth}
\centering
\includegraphics[width=\textwidth]{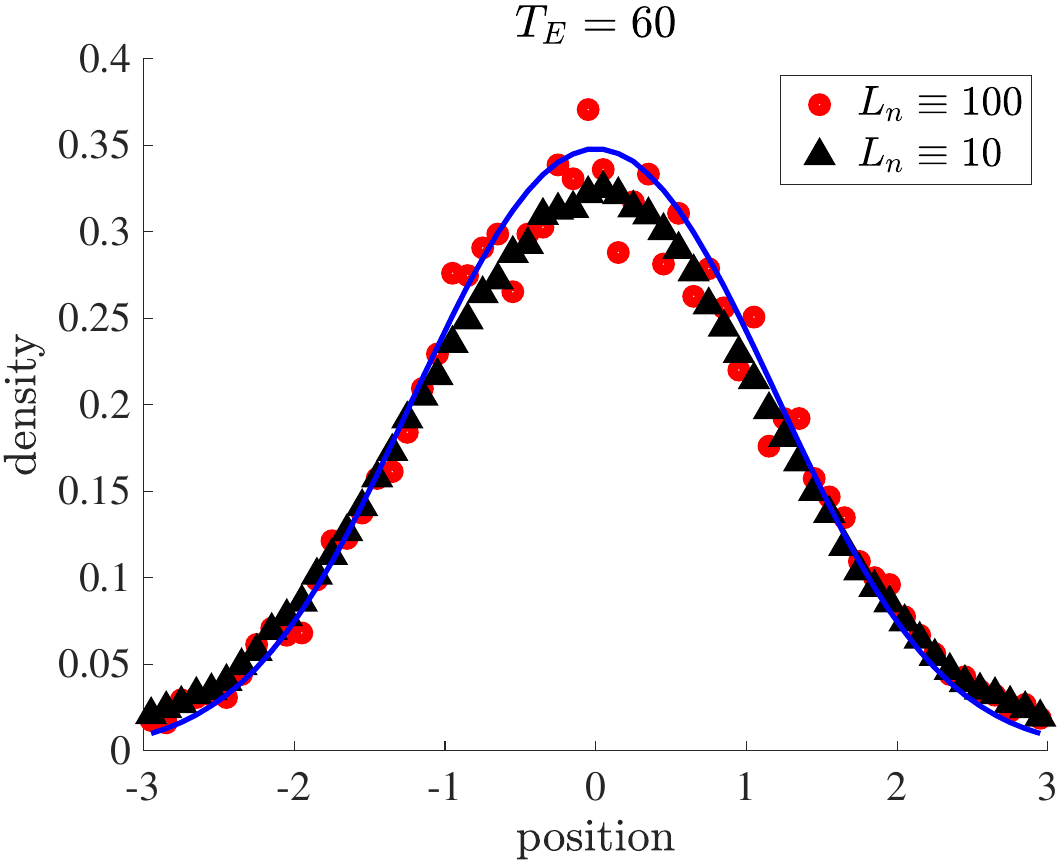}
\end{subfigure}
\hfill
\begin{subfigure}[t]{0.32\textwidth}
\centering
\includegraphics[width=\textwidth]{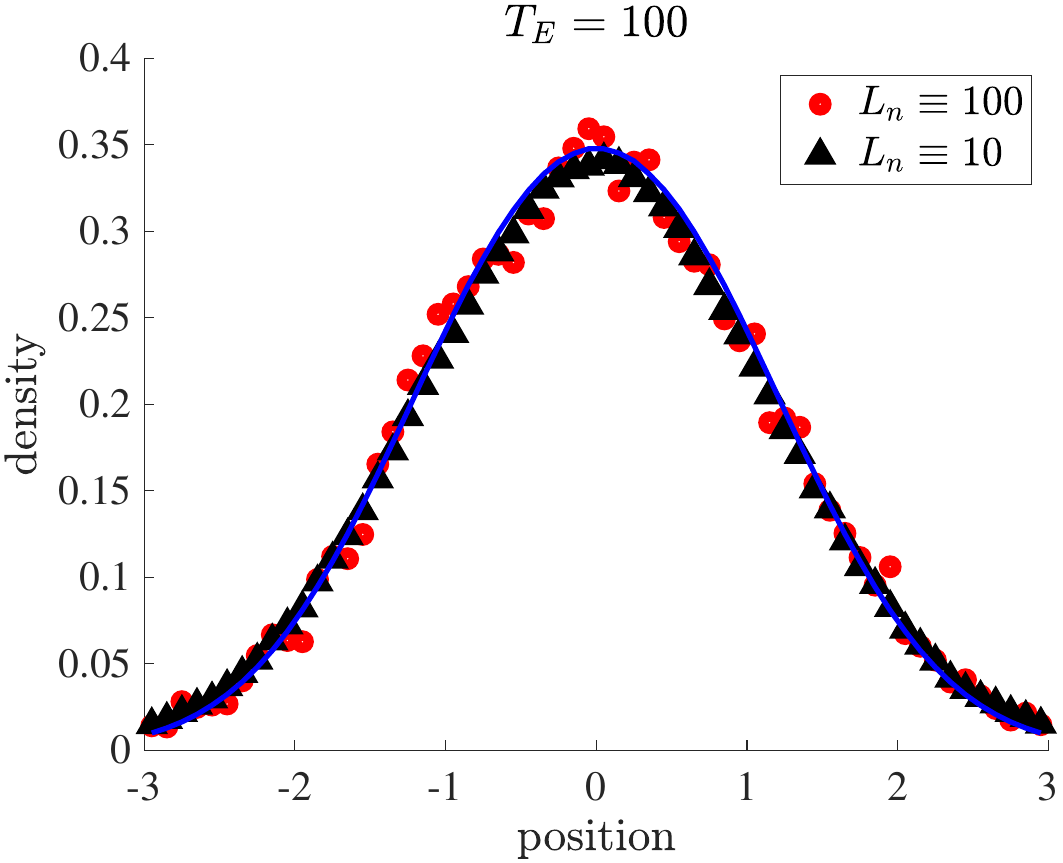}
\end{subfigure}
\caption{Empirical densities obtained by RB-SHMC with different $L_n$, numbers of leapfrog steps, at three different evolution time points $T_E = 30, 60, 100$.} The red dots are for $L_n=100$ leapfrog steps and the black triangles are for $L_n=10$ leapfrog steps in each iteration. The blue curve is the reference density obtained by running HMC, 25 leapfrog steps with time step size $\Delta t = 0.01$ per iteration, for $4e8$ iterations. The empirical density obtained by RB-SHMC with large $L_n$ (red dots), i.e. more leapfrog steps in each iteration, approximates the reference density better than the empirical density obtained by RB-SHMC with small $L_n$ (black triangles).
\label{fig:3 empirical densities}
\end{figure}

Figure \ref{fig:3 empirical densities} shows the empirical densities at ``evolution times'' $T_E = 30, \ 60, \ 100$ obtained by the simple ``bin-counting''. Specifically, the empirical density in the $j$-th bin is approximated by $\bar\mu_j \approx N_j/(N_{tot}h)$, where $N_j$ is the number of particles in the $j$-th bin during the entire sampling process and $N_{tot}$ is the number of total particles (clearly, $N_{tot}=N_sN$ where $N_s$ is the number of iterations in sampling). 
One can observe that RB-SHMC with larger $L_n$ approaches the equilibrium faster, meaning that it can have shorter burn-in phase. This observation is consistent with the heuristic explanation provided in Section \ref{subsec:L}.

To quantify the effects of $L_n$ in different stages of sampling, we use the following quantity (which we call ``relative error'') to gauge the 
error of the empirical density
\[
\tilde{U} := \sum_j \left|\frac{N_j}{N_{tot}} - \frac{\tilde{N}_j}{\tilde{N}_{tot}}\right|,
\]
where $j$ denotes the bin index, and the quantities with tildes are the reference quantities obtained by HMC (25 leapfrog steps with time step size $\Delta t = 0.01$ per sampling iterations, for a total of $4e8$ iterations).

\begin{figure}[htb]
\centering
\begin{subfigure}[t]{0.48\textwidth}
\centering
\includegraphics[width=\textwidth]{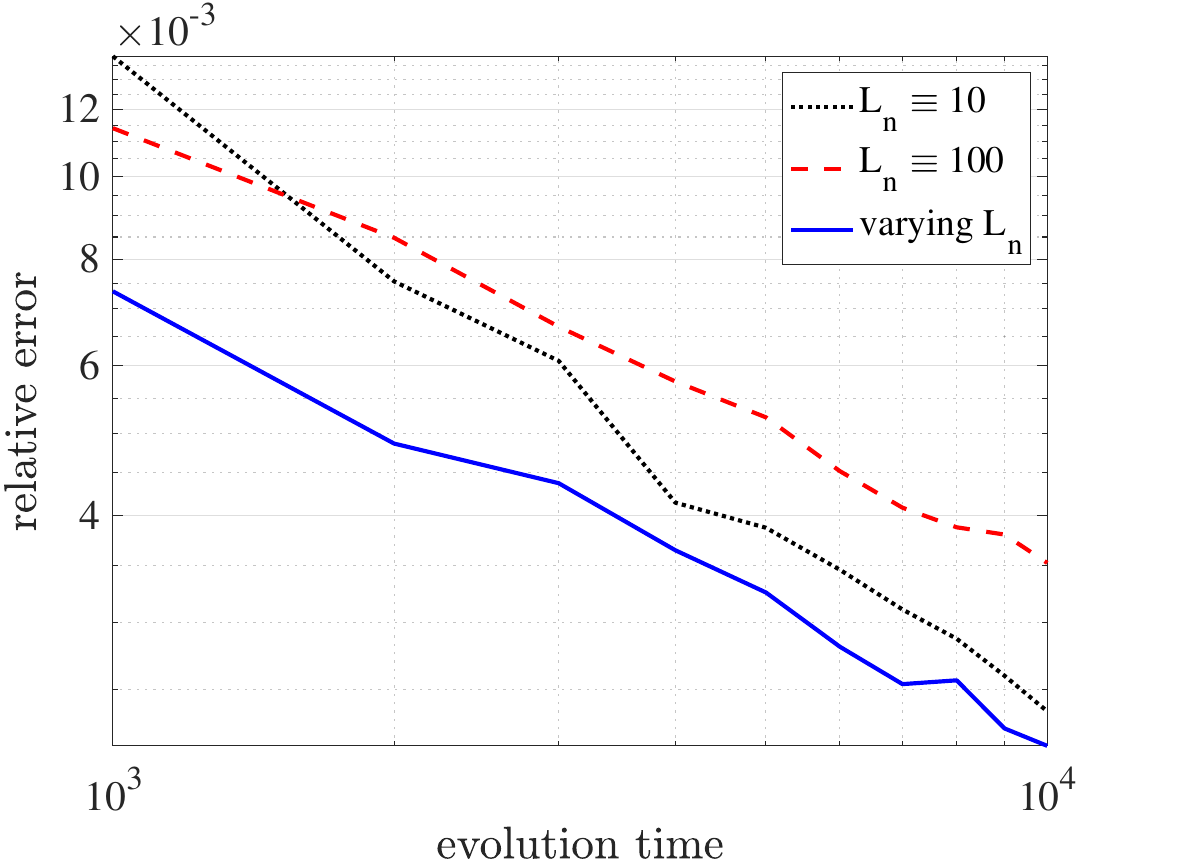}
\end{subfigure}
\hfill
\begin{subfigure}[t]{0.48\textwidth}
\centering
\includegraphics[width=\textwidth]{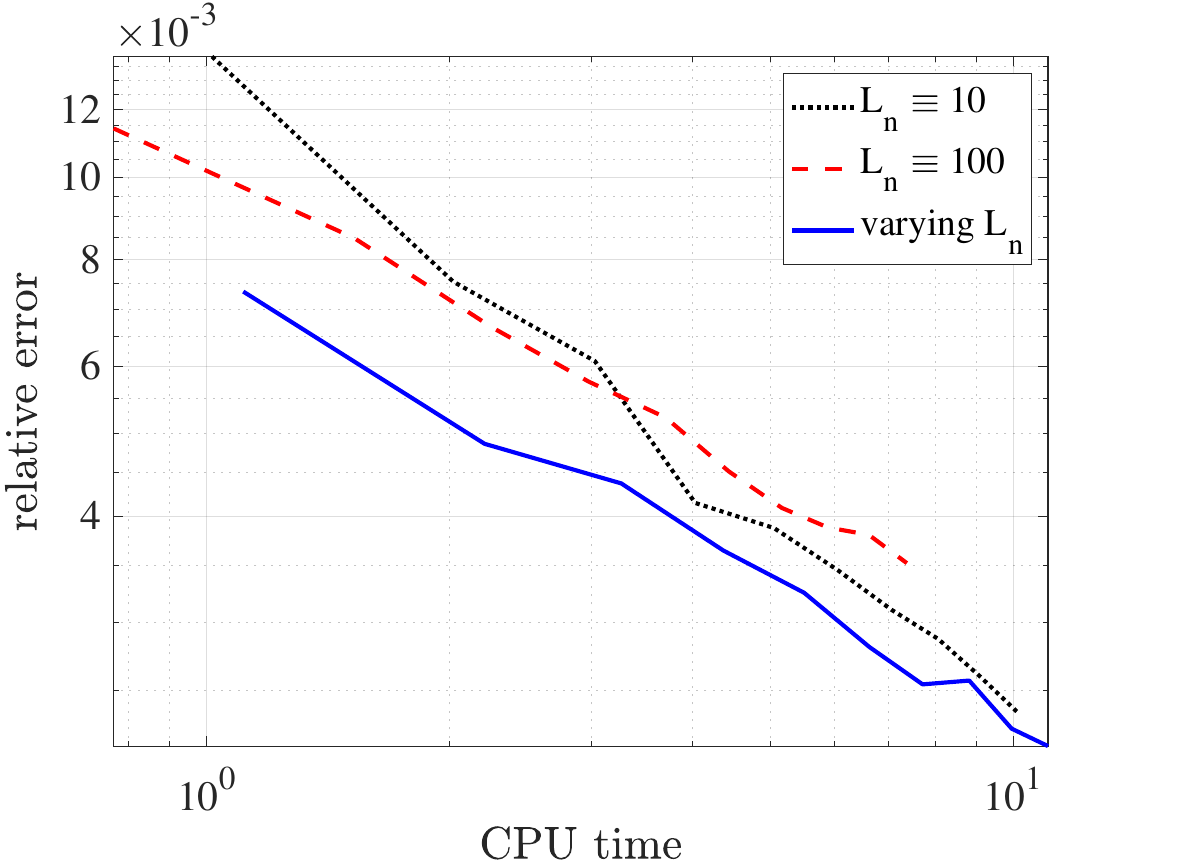}
\end{subfigure}
\caption{Left panel: relative error versus evolution time. Right panel: relative error versus CPU time. In both panels, blue curve is for RB-SHMC with varying $L_n$, red dashed curve is for RB-SHMC with $L_n \equiv 100$ while black dotted curve is for RB-SHMC with $L_n \equiv 10$. The relative error of the empirical density obtained by RB-SHMC with varying $L_n$ (blue curve) decays faster than those of RB-SHMC with fixed $L_n$. }
\label{fig:RBHMC with different amounts of random batches}
\end{figure}

As can be seen from Figure \ref{fig:RBHMC with different amounts of random batches}, the experiment with larger $L_n$ is more efficient when the evolution time is not very long as it has shorter burn-in phase. However, the systematic error becomes larger when one runs the sampling for longer time. Hence, an adaptive strategy can be used: one uses a large $L_n$ in the early sampling phase, and when the distribution is close to equilibrium one can then switch to smaller $L_n$ so that the results can be more accurate. Here, for comparison, we set $L_n=100$ for $T_E\le 100$ and then set $L_n=10$.
The error vs. the evolution time and CPU time are plotted in Figure \ref{fig:RBHMC with different amounts of random batches}.  One can see that running RB-SHMC with a large $L_n$ for a few units of evolution time at the beginning accelerates the empirical density to approach its equilibrium state while reducing $L_n$ to a smaller number after reaching a quasi-equilibrium yields a smaller error. The adaptive method can thus be efficient.

\subsection{Dyson Brownian motion}\label{subsec:DBm}

In this section, we demonstrate the efficiency of RB-SHMC by simulating the Dyson Brownian motion with singular interaction kernels, which models the eigenvalues of certain random matrices \cite{dyson1962brownian,erdos2017dynamical}: 
\begin{gather}\label{eq:dysonBMips}
d\lambda_i(t) = \left(-\lambda_i(t) + \frac{1}{N-1} \sum_{j: j\neq i} \frac{1}{\lambda_i(t) - \lambda_j(t)}\right) dt + \sqrt{\frac{2}{ N-1}}dW_i(t), 
\end{gather}
where $\{\lambda_i(t)\}$'s denote the eigenvalues and $\{W_i(t)\}$'s are independent standard Brownian motions. 
We have replaced $N$ in the original model with $N-1$, which is nonessential when $N\gg 1$.
It has been shown in \cite{tao2012topics} that in the $N\to\infty$ limit, these eigenvalues follow a distribution $\rho$ that satisfies
\begin{gather}\label{eq:dysonlimiteq}
\partial_t\rho(x, t)+\partial_x(\rho(u-x))=0, ~~u(x, t)=\pi(H\rho)(x, t)=\mathrm{p.v.}\int_{\mathbb{R}}\frac{\rho(y,t)}{x-y}\,dy,
\end{gather}
where $H(\cdot)$ is the Hilbert transform on $\mathbb{R}$ and p.v. is the standard notation for integrals evaluated using the Cauchy principal value. The mean field equation \eqref{eq:dysonlimiteq} has the following invariant measure:
\begin{gather}\label{eq:semicircle}
\rho(x)=\frac{1}{\pi}\sqrt{2-x^2},
\end{gather}
which is the celebrated Wigner semicircle law. 
Clearly, \eqref{eq:dysonBMips} has an invariant measure 
\[
\mu \propto \exp\left[-\Big(\frac{N-1}{2}\sum_ix_i^2-\sum_{i,j: i<j}\ln|x_i-x_j|\Big)\right], 
\]
where the interaction $\phi(x_i-x_j) = -\ln(|x_i-x_j|)$ is singular. 
For samples $(\lambda_1,\cdots, \lambda_N)\sim \mu$, we expect that the random empirical measure $\mu_N=\frac{1}{N}\sum_i \delta(x-\lambda_i)$ will be close to $\rho$ in the weak topology. Below, we collect $N_s$ such configurations $(\lambda_1,\cdots,\lambda_N)$ by sampling from $\mu$ using our sampling methods, and then compare
the empirical measure of these $N_s N$ samples to the target Wigner semicircle law $\rho$.

Following \cite{li2020random}, we use the surrogate potential $\phi_1(x_i-x_j) = \ln(100)-100|x_i-x_j|+1$ when $0<|x_i-x_j|<0.01$ to remove the singularity while performing RB-SHMC sampling. 

\begin{table}[h]
       \centering
	\begin{tabular}{|c|c|c|c|}
	\hline 
	\multicolumn{2}{|c|}{} & $L_n$  & $\Delta t_n$ \\
	\hline
	\multirow{3}{*}{RB-SHMC} & $n \leq 10^5$ & 100 & $2\times10^{-4}$\\ 
	\cline{2-4}
 						& $10^5 < n \leq 4\times10^5$ & 20 & $2\times10^{-4}$\\ 
 	\cline{2-4}
 						& $n > 4\times10^5$ & 10 & $10^{-4}$\\ 			
	 \hline 
	 \multicolumn{2}{|c|}{RBMC-fixed} & 10  & $10^{-4}$ \\
	\hline
	\multirow{3}{*}{RBMC-varying} & $n \leq 2\times10^5$ & 100 & \multirow{3}{*}{$10^{-4}$}\\ 
	\cline{2-3}
 						& $2\times10^5 < n \leq 8\times10^5$ & 20 & \\ 
 	\cline{2-3}
 						& $n > 8\times10^5$ & 10 & \\ 
	\hline 
	\end{tabular} 
\caption{The specific choices of the number of discretization steps $L_n$ and the stepsize $\Delta t_n$ in each iteration of RB-SHMC, RBMC-fixed and RBMC-varying in the experiment for Dyson Brownian motion. }
\label{tabl:DBm}
\end{table}

Recall that the example in Section \ref{subsec:test examples} indicates that running $L_n$ leapfrog steps with $L_{n}$ large in the burn-in phase can accelerate the convergence to a quasi equilibrium state while a small $L_n$ in later iterations can reduce the error.  Hence, we adopt a dynamic leapfrog steps in the simulation for this example. 
Specifically, we choose $L_n = 100$ in the first $10^5$ sampling iterations and $L_n = 20$ in the next $3 \times 10^5$ iterations. 
Then we reduce $L_n$ to 10 and keep it fixed until the end of the sampling process.  
The corresponding time step size $\Delta t_n$ also varies in different sampling phases (detailed in table \ref{tabl:DBm}). 
We compare it to the RBMC which is a splitting Monte Carlo based on overdamped Langevin dynamics. With $L_n$ denoting the number of discretization steps in the $n$-th iteration, we consider both fixed $\Delta t_n$ and $L_n$ parameters (denoted as ``RBMC-fixed'') and varying $L_n$ (denoted as ``RBMC-varying''). Table \ref{tabl:DBm} shows the specific choice of these parameters for the three methods considered in this example.

\begin{figure}[htb]
\centering
\begin{subfigure}[t]{0.48\textwidth}
\centering
\includegraphics[width=\textwidth]{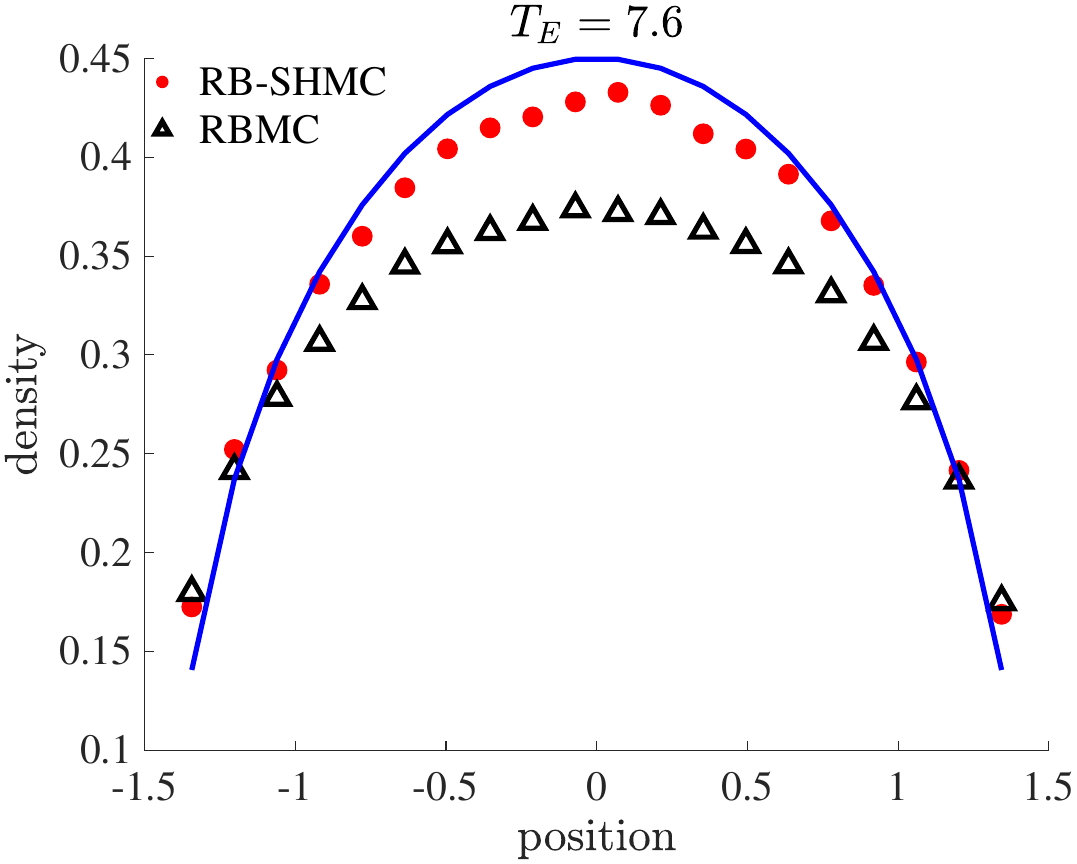}
\end{subfigure}
\hfill
\begin{subfigure}[t]{0.48\textwidth}
\centering
\includegraphics[width=\textwidth]{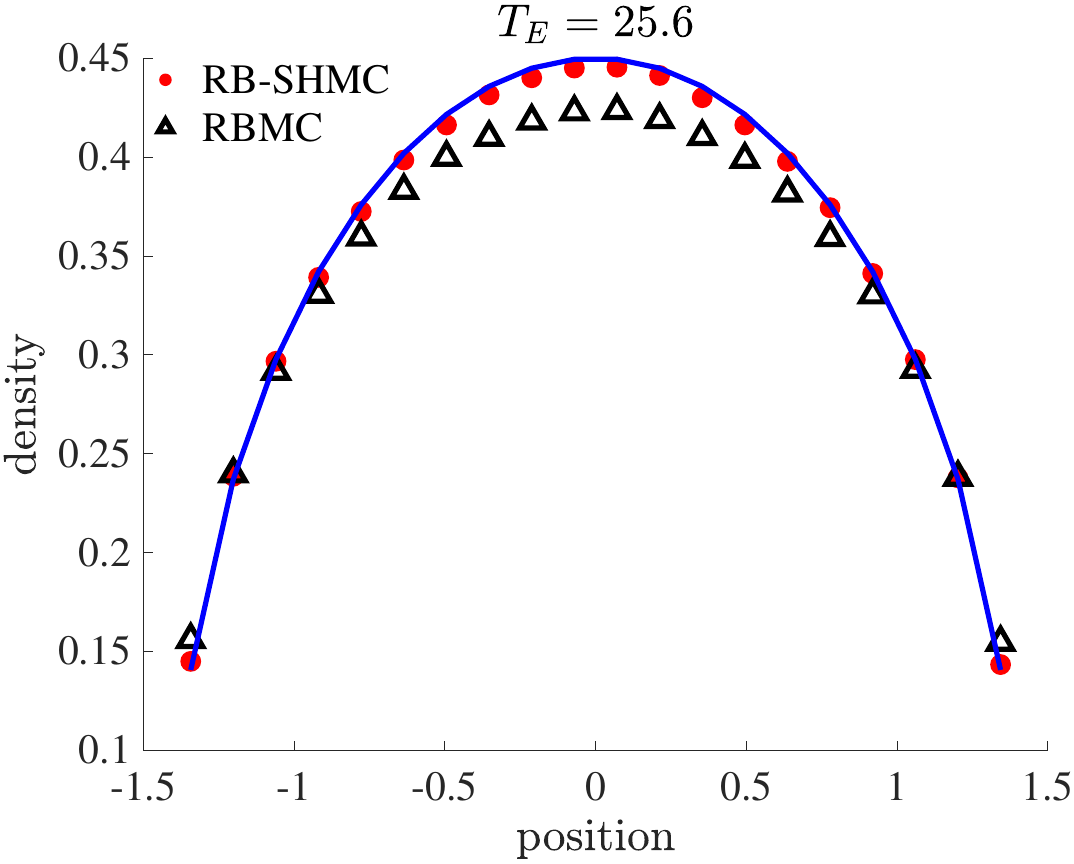}
\end{subfigure}
\caption{The empirical densities obtained by RB-SHMC and RBMC at different evolution times. Specifically, the left panel is for evolution time $T_E = 7.6$, or equivalently $10^6$ iterations, and the right panel is for $T_E = 25.6$, or equivalently $10^7$ iterations.} The red dots and the black triangles are the empirical densities obtained by RB-SHMC and RBMC-fixed respectively. The blue curve is the equilibrium semicircle law \eqref{eq:semicircle}. The empirical density obtained by RB-SHMC approaches faster than that of RBMC-fixed.
\label{fig:Empirical density DBm}
\end{figure}

The same evolution time \eqref{eq:EvoTime} is used again for this example. 
Figure \ref{fig:Empirical density DBm} presents the empirical densities obtained by running RB-SHMC (red dots) for $10^6$ and $10^7$ sampling iterations, corresponding to 7.6 and 25.6 units of evolution time respectively. The numerical results generated by running RBMC (black triangles) for the same evolution time are also presented for comparison. Obviously, RB-SHMC approaches the equilibrium much faster than RBMC which is consistent with our heuristic justification in Section \ref{subsec:L}.

\begin{figure}[htb]
\centering
\begin{subfigure}[t]{0.48\textwidth}
\centering
\includegraphics[width=\textwidth, height=0.69\textwidth]{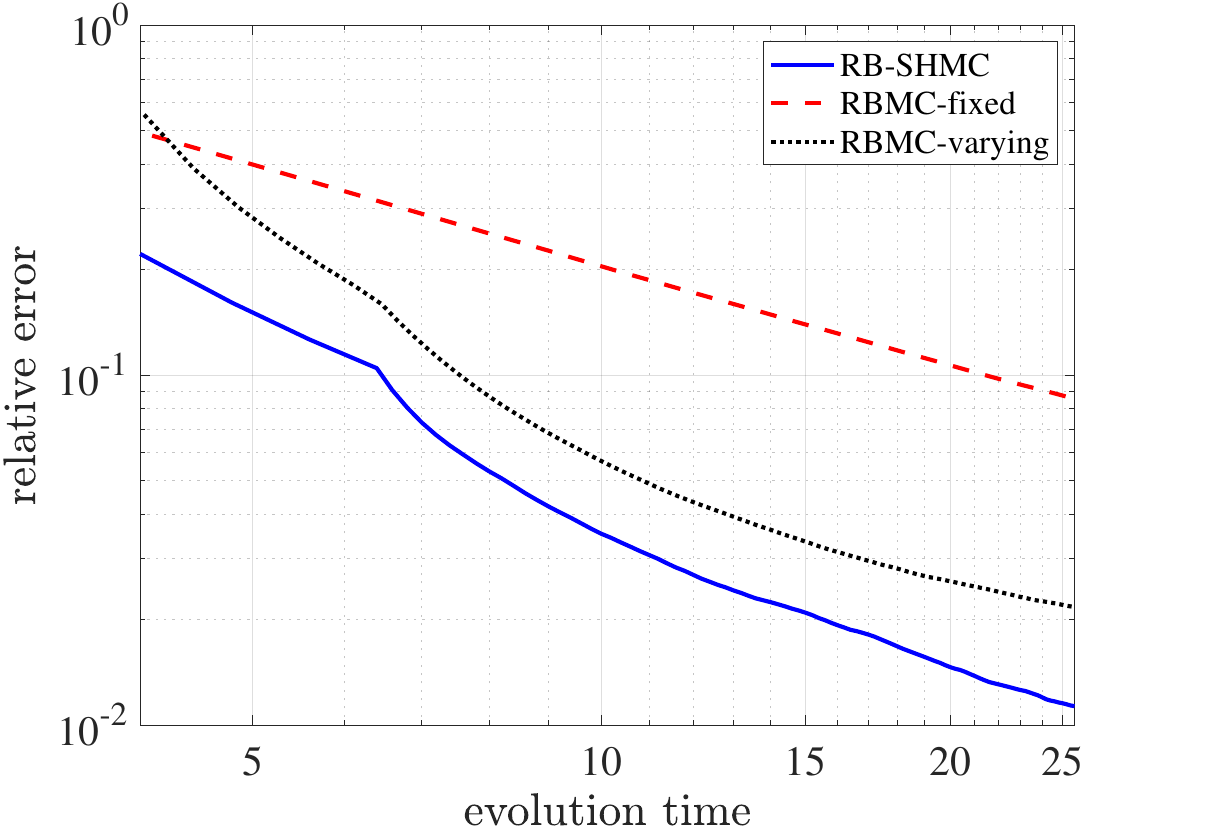}
\end{subfigure}
\hfill
\begin{subfigure}[t]{0.48\textwidth}
\centering
\includegraphics[width=\textwidth]{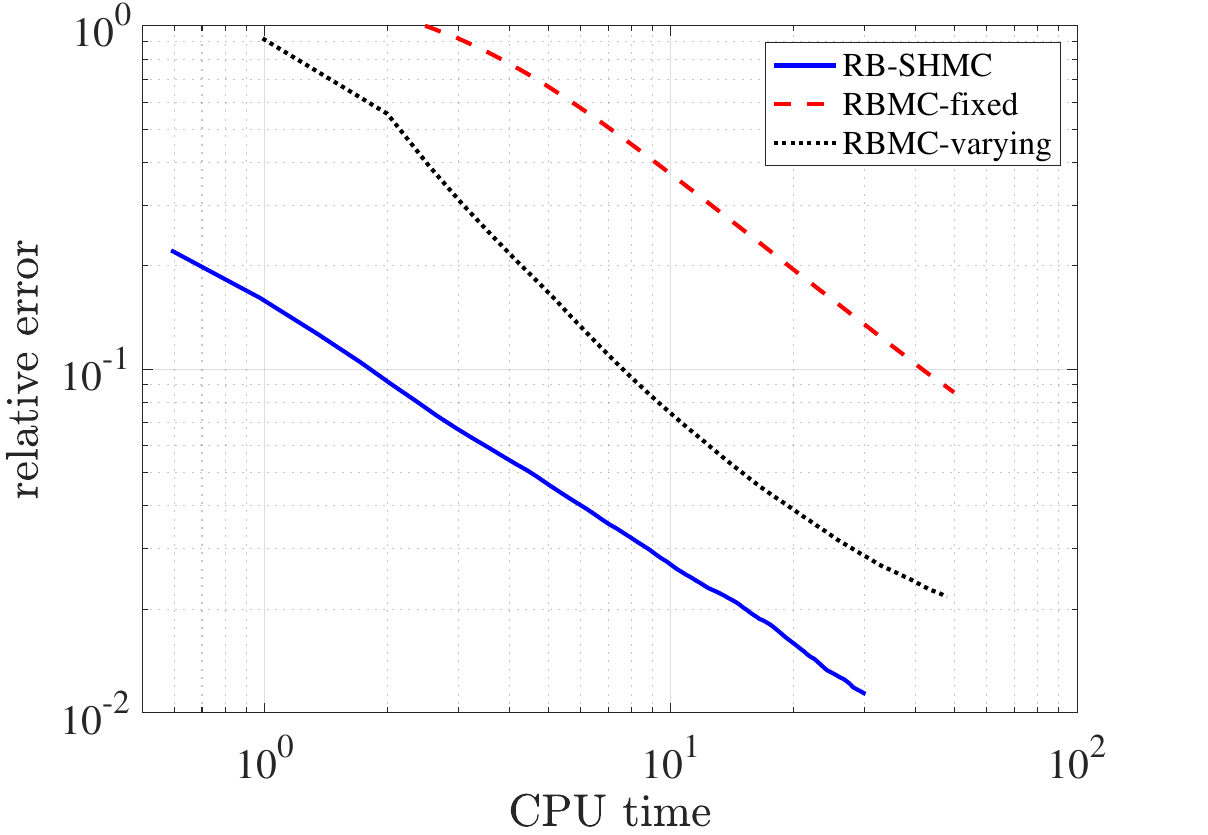}
\end{subfigure}
\caption{Left panel: relative error versus evolution time. Right panel: relative error versus CPU time. In both panels, the blue curve is for RB-SHMC, the red dashed curve is for RBMC-fixed while the black dotted curve is for RBMC-varying. The relative error of the empirical density obtained by RB-SHMC (blue curve) decays faster than those of the other two versions of RBMC considered in this example. }
\label{fig:error of RBHMC, RBMC and RBMC-v2}
\end{figure}

In Figure \ref{fig:error of RBHMC, RBMC and RBMC-v2}, we visualize the relative error of the three methods vs. the evolution time and CPU time respectively. The same ``relative error'' as in Example \ref{subsec:test examples} has been used. 
Clearly, RB-SHMC is far more efficient than RBMC and is also superior to RBMC-v2.

\subsection{Bimodal distribution}\label{subsec:bimodal}

The two examples given below will show that an appropriate potential splitting effectively prevents the problem of generating biased samples when the target distribution is multimodal. These examples showcase the advantage of SHMC and RB-SHMC even when there is no singularity in the potential, further expanding the application territory of the framework of SHMC and RB-SHMC where they can be superior to many other existing methods.

Intuitively, if we can ``flatten'' the landscape of $U$, then the samples can escape from one local minimum, cross the barrier and visit other local minima more efficiently.  
Without loss of generality, we only consider the bimodal distributions for simplicity. 

\subsubsection{Double well potential}\label{subsubsec:double well}

Consider the following one-dimensional double well potential
 \begin{equation}\label{eq:1D double well potential}
 	U(x) = \frac{H}{W^4} (x^2 - W^2)^2,
 \end{equation}
 where $H = 20/\beta$ is the height of the barrier and $W = 1$ is the half width between the two wells. 
 Clearly, the Gibbs distribution corresponding to \eqref{eq:1D double well potential} is 
  \begin{equation}\label{eq:1D double well distribution}
 	\mu(x) = \frac{1}{Z} \exp{\left[-\beta\frac{H}{W^4} (x^2 - W^2)^2\right]},
 \end{equation}
 where $Z$ is the normalizing constant.

\begin{figure}[htb]
\centering
\begin{subfigure}[t]{0.48\textwidth}
\centering
\includegraphics[width=\textwidth]{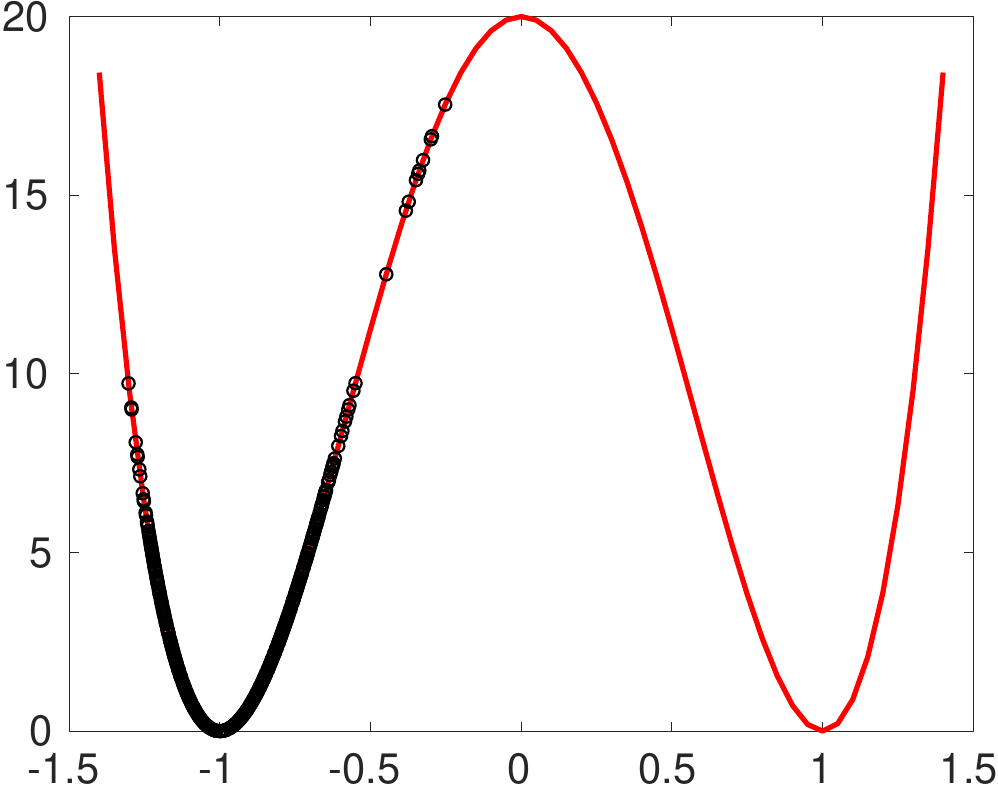}
\subcaption*{Samples}
\end{subfigure}
\hfill
\begin{subfigure}[t]{0.48\textwidth}
\centering
\includegraphics[width=\textwidth]{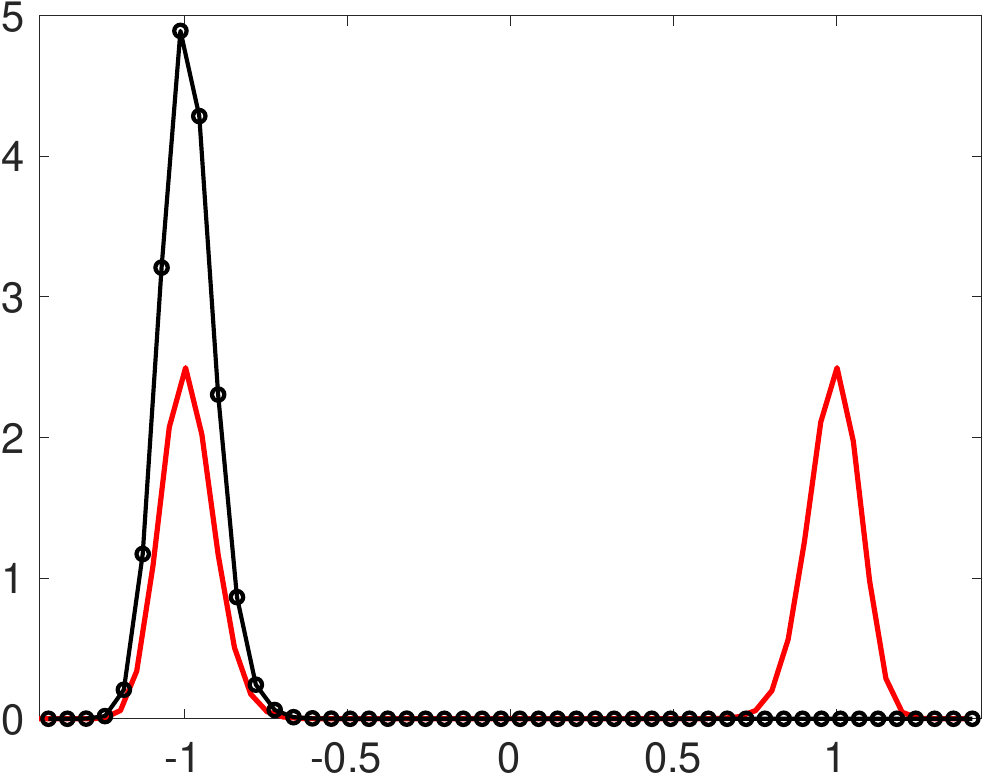}
\subcaption*{Empirical density}
\end{subfigure}
\caption{Samples and the empirical density obtained by HMC with $L = 40$ leapfrog steps and time step size $\Delta t = 2W/L = 0.05$ in each iteration. In the left panel, the black dots are the samples obtained by HMC while the red curve is the exact potential \eqref{eq:1D double well potential}. In the right panel, the black dots are the empirical density obtained by HMC while the red curve is the analytic density \eqref{eq:1D double well distribution}. }
\label{fig:oneD potential & distribution-HMC}
\end{figure}

We first apply the basic HMC algorithm to sample \eqref{eq:1D double well distribution}. 
There are $L = 40$ leapfrog steps in each iteration and the time step is $\Delta t = 2W/L = 0.05$. 
The initial position of the samples is generated randomly on $[-W,W]$. 
Drawing $10^5$ samples from \eqref{eq:1D double well distribution}, the results plotted in Figure \ref{fig:oneD potential & distribution-HMC} clearly show that samples are trapped in the local minimum $x = -W$. 

\begin{figure}[htb]
\centering
\begin{subfigure}[t]{0.48\textwidth}
\centering
\includegraphics[width=\textwidth]{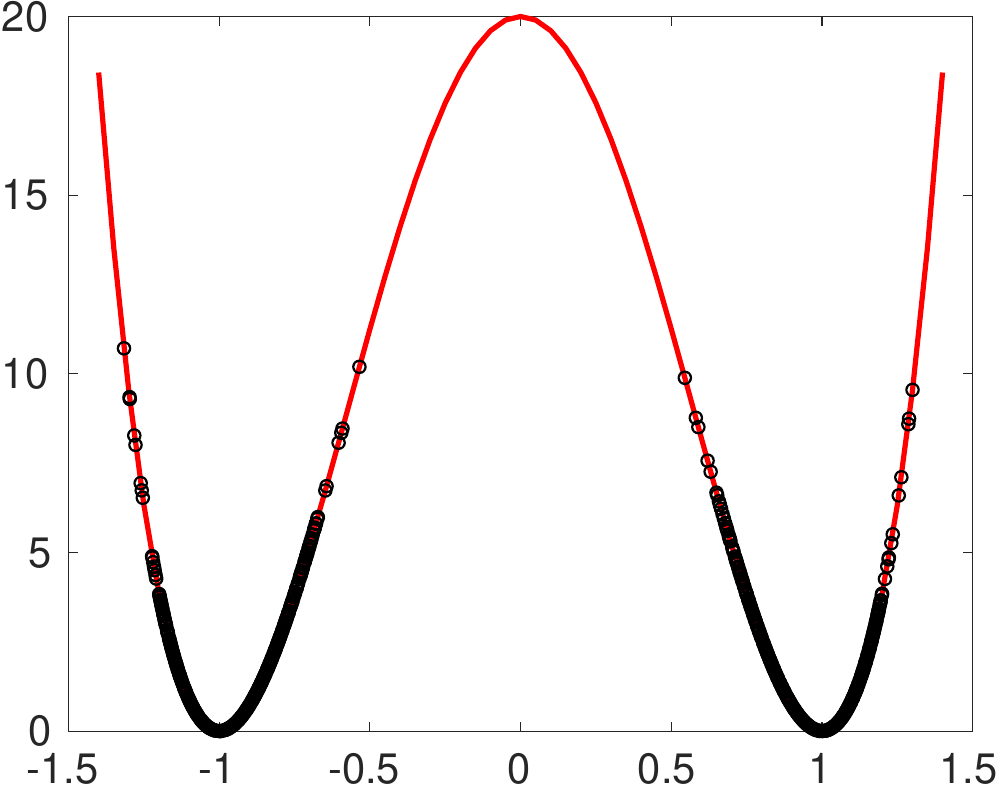}
\subcaption*{Samples}
\end{subfigure}
\hfill
\begin{subfigure}[t]{0.48\textwidth}
\centering
\includegraphics[width=\textwidth]{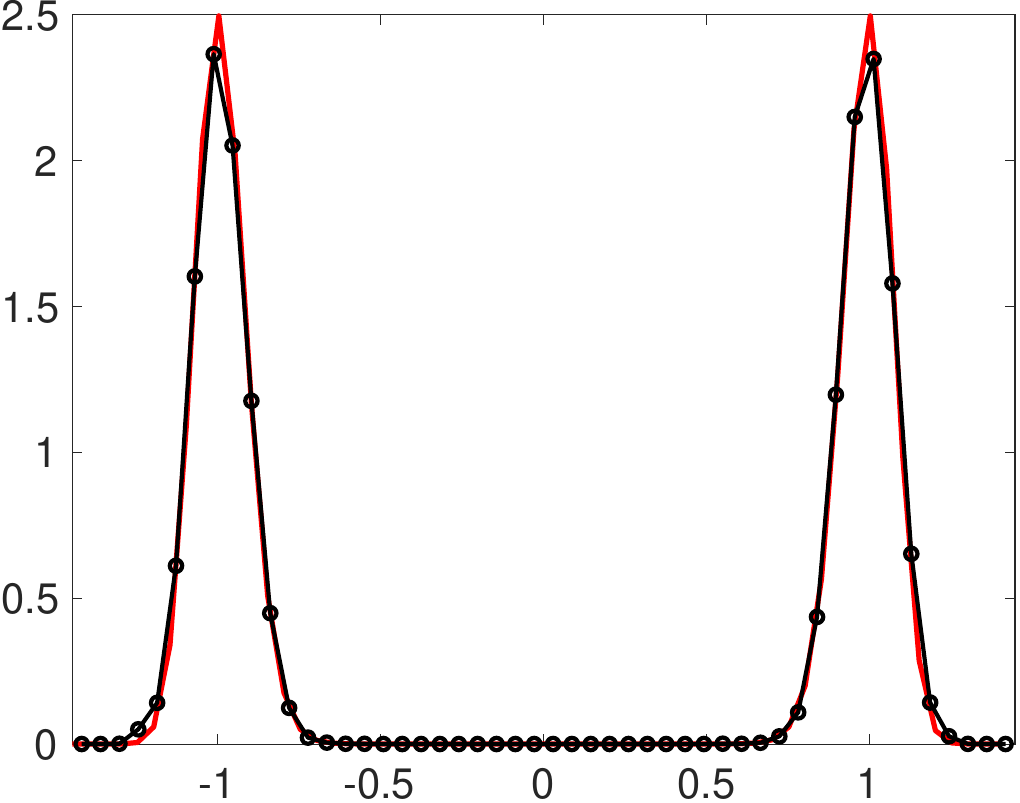}
\subcaption*{Empirical density}
\end{subfigure}
\caption{Samples and the empirical density obtained by SHMC using the surrogate potential $U_1$ given by \eqref{eq:DW_u1} with $\lambda = 0.05$. There are $L = 40$ leapfrog steps with time step size $\Delta t = 2W/L = 0.05$ in each iteration. In the left panel, the black dots are the samples obtained by SHMC while the red curve is the exact potential \eqref{eq:1D double well potential}. In the right panel, the black dots are the empirical density obtained by SHMC while the red curve is the analytic density \eqref{eq:1D double well distribution}. }
\label{fig:oneD potential & distribution-SHMC}
\end{figure}

To fix the above problem, we decompose $U$ into the sum of $U_1$, a double well potential energy with lower barrier to flatten the landscape and $U_2$, the difference between $U$ and $U_1$. Specifically, we choose
\begin{eqnarray}\label{eq:DW_u1}
U_1(x) = \begin{cases}
				\lambda U(x),  |x| < W\\
				U(x),  |x| \ge W 
				\end{cases}
\end{eqnarray}
and
\begin{eqnarray}\label{eq:DW_u2}
U_2(x) = \begin{cases}
					(1-\lambda) U(x),  |x| < W\\
					0,  |x| \ge W 
				\end{cases}
\end{eqnarray}
where $\lambda \in (0, 1)$ is a positive constant being small enough. Here, we choose $\lambda = 0.05$. 
We apply SHMC to draw $10^5$ samples from \eqref{eq:1D double well distribution}. 
We again set $L = 40$ leapfrog steps with time step $\Delta t = 2W/L$ in each iteration. 
The samples we obtained and the resulting empirical density are presented in Figure \ref{fig:oneD potential & distribution-SHMC}. Clearly, the splitting strategy can successfully overcome the local barrier. 

\subsubsection{Gaussian mixture model}\label{subsubsec:gMM}

As can be seen from the previous example, an appropriate splitting easily alleviates the problem of generating biased samples in the vanilla HMC algorithm when we have some knowledge about the landscape, e.g. where the local minima are located. However, in many problems including problems in Bayesian inference, the locations of the local minima are generally unknown. In this section, we consider such examples and propose a potential solution by modifying SHMC and RB-SHMC.

To further illustrate our proposed solution, we slightly modify the first example in \cite[Section 5.1]{welling2011bayesian}, which is a classical Bayesian inference problem of mixture models. We will evaluate the performance of sampling from the posterior distribution using SHMC and RB-SHMC. 
To be specific, we consider the following Gaussian mixture model:
\begin{gather}\label{eq:2D-GMM}
\begin{split}
& \theta_1 \sim \cN(0, \sigma_1^2),\, \theta_2 \sim \cN(0, \sigma_2^2), \\
& y_i \sim \frac{1}{2}\cN(\theta_1, \sigma_y^2) + \frac{1}{2}\cN(\theta_1+\theta_2, \sigma_y^2),
\end{split}
\end{gather}
where $\sigma_1^2 = 10, \, \sigma_2^2 = 1$ and $\sigma_y^2 = 0.5$. 
$N = 100$ data points are drawn from the model with $(\theta_1,\theta_2) = (0, 2)$. 
The left panel of Figure \ref{fig:U & MarDist} shows the potential $U$ corresponding to the target posterior distribution, from which one can imagine that the barrier between the two wells of $U$ will prevent samples from moving between these two wells. Indeed, the samples generated by HMC are trapped in one well; see the left panel of Figure \ref{fig:samples of HMC, SHMC and SRBHMC}.

Inspired by the metadynamics approach \cite{laio2002escaping}, we add scaled Gaussian kernels in the wells to raise the altitudes of the landscape and obtain a new potential energy $U_1$. 
Defining $U_2 := U - U_1$, we get a decomposition $U = U_1 + U_2$. 
The locations of the wells are approximated by $\text{mode}^{mar} (\theta_{1})$ and $\text{mode}^{mar} (\theta_{2})$, the modes\footnote{One may obtain the modes of the marginal distributions using any off-the-shelf mode-finding methods: e.g. the function `findpeaks()' in Matlab.} of the marginal distributions of $\theta_1$ and $\theta_2$ (see the right panel of Figure \ref{fig:U & MarDist}), while $h_{b}$, the height of the barrier in $U$, is estimated by the difference between the altitude at the mid-point between the two wells and the average altitudes between the two wells. 
From our experiments, the scaled Gaussian kernel with height $h_G = h_{b} + 10/\beta$ and covariance matrix $I_2$ turns out to be a good choice for this specific example. 
In particular, 
\begin{gather}
U_1(\theta) = U(\theta) + 2\pi h_G\left[\cN(\text{mode}^{mar} (\theta_{1}), I_2) + \cN(\text{mode}^{mar} (\theta_{2}), I_2)\right].
\end{gather}

\begin{figure}[htbp]
\centering
\begin{subfigure}[t]{0.48\textwidth}
\centering
\includegraphics[width=\textwidth]{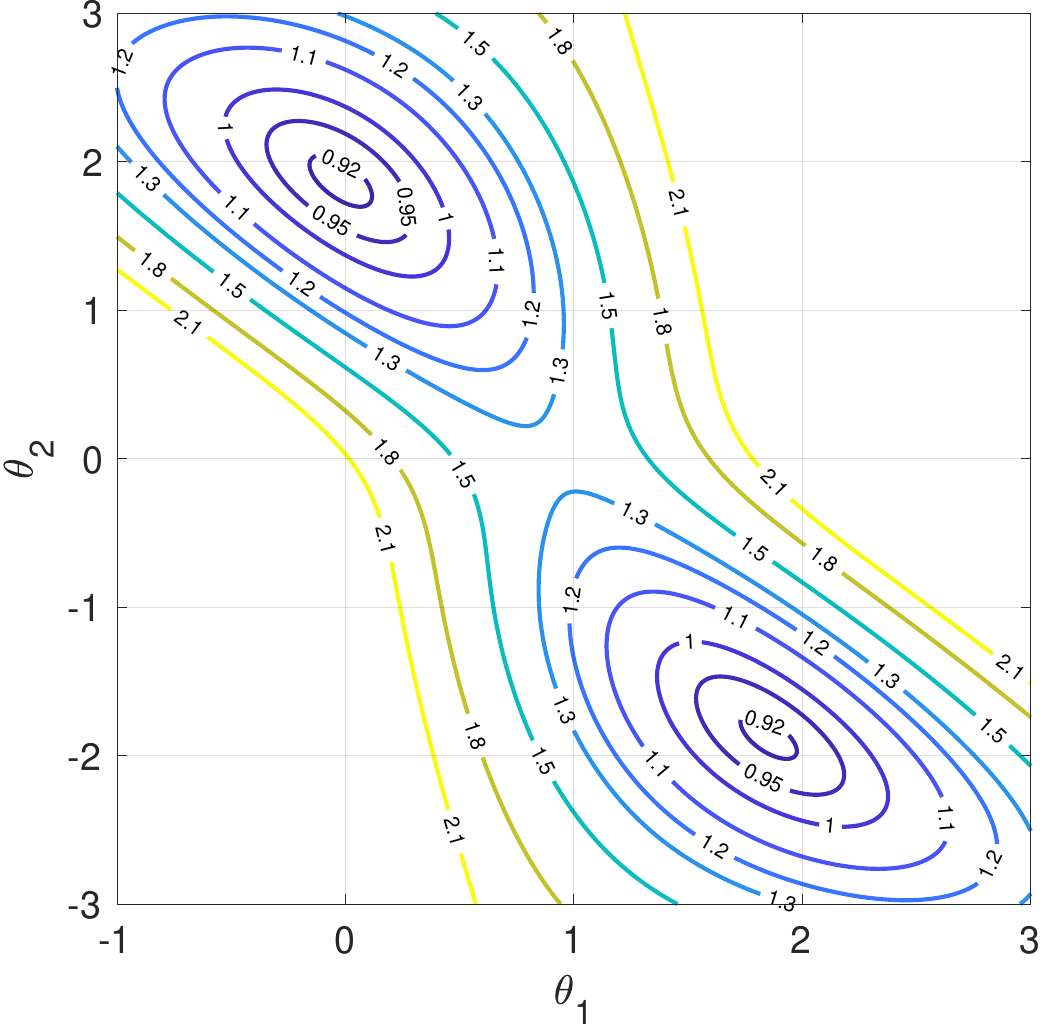}
\subcaption*{Contour plot of $U$}
\end{subfigure}
\hfill
\begin{subfigure}[t]{0.48\textwidth}
\centering
\includegraphics[width=\textwidth]{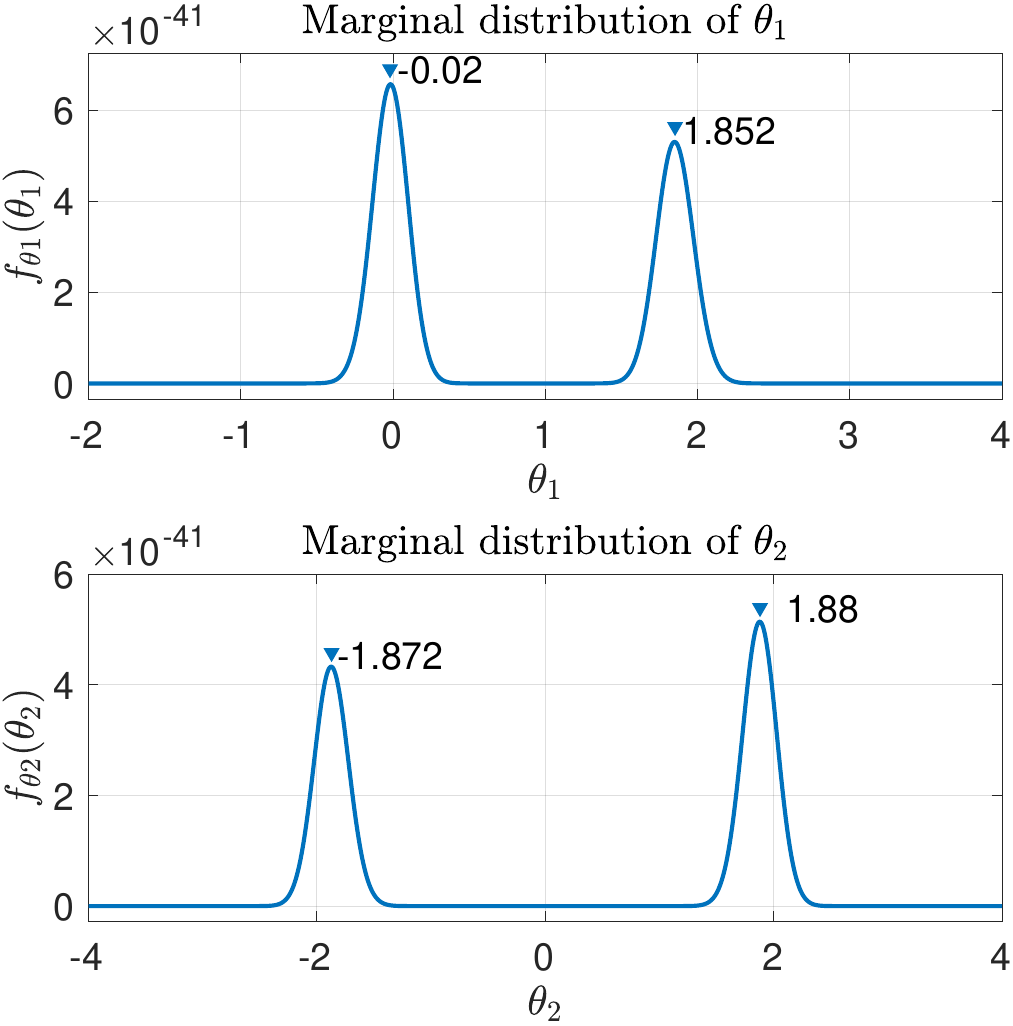}
\subcaption{Marginal distributions}
\end{subfigure}
\caption{Left panel: The contour plot of the potential corresponding to the posterior of model \eqref{eq:2D-GMM}. Right panel: The marginal distributions of the target posterior distribution. }
\label{fig:U & MarDist}
\end{figure}

We choose $\beta = N$ such that the scaling factor of the big summation in $U$ is $1/N$: 
\begin{gather}
U(\theta) = \frac{1}{N}\left(\frac{\theta_1^2}{2\sigma_1^2} + \frac{\theta_2^2}{2\sigma_2^2}\right) - \frac{1}{N}\sum_{i = 1}^{N}\log\left[\exp\left(-\frac{(\theta_1-y_i)^2}{2\sigma_x^2}\right) + \exp\left(-\frac{(\theta_1+\theta_2-y_i)^2}{2\sigma_x^2}\right)\right].
\end{gather}
Under the specific setting of our experiment, the distance between the two wells in $U$ is $d_w = 4.1931$ and the height of the barrier is $h_b = 0.4054$.  The energy barriers for two wells are reduced to 0.0131 and 0.0113 respectively after the above two scaled Gaussian kernels are added to the wells. 
Fix the evolution time of each iteration $L\Delta t = 0.4d_w$ and time step size $\Delta t = 0.01, 0.001$, we apply HMC, SHMC and RB-SHMC with batch size $s = 10$ to sample from the posterior. 
For each method, we collect $10^4$ samples after $10^3$ burn-in iterations. 
The scatter plot of the samples obtained by these three methods are shown in Figures \ref{fig:samples of HMC, SHMC and SRBHMC}. 
Clearly, SHMC and RB-SHMC can sample from the two modes of the posterior without multiple initializations while the samples of HMC are trapped in one single well. 
For HMC, we even run the algorithm with longer evolution time per iteration $L \Delta t = 2d_w$. But still, the samples generated by HMC fail to escape from a single well. 
We have also tried to use SGLD \cite{welling2011bayesian}, the naive version of SGHMC \cite{chen2014stochastic} and split HMC with ``the splitting of data'' strategy in \cite{shahbaba2014split}; however, all of them yield severely biased samples trapped in one single well since both the distance between the two wells and the height of the barrier between the two wells are large (larger than those in the similar experiment conducted in \cite{welling2011bayesian}). 
In summary, one can see that a suitable splitting strategy is powerful in the problem of sampling from a bimodal distribution. 

\begin{figure}[htbp]
\centering
\begin{subfigure}[t]{0.32\textwidth}
\centering
\includegraphics[width=\textwidth]{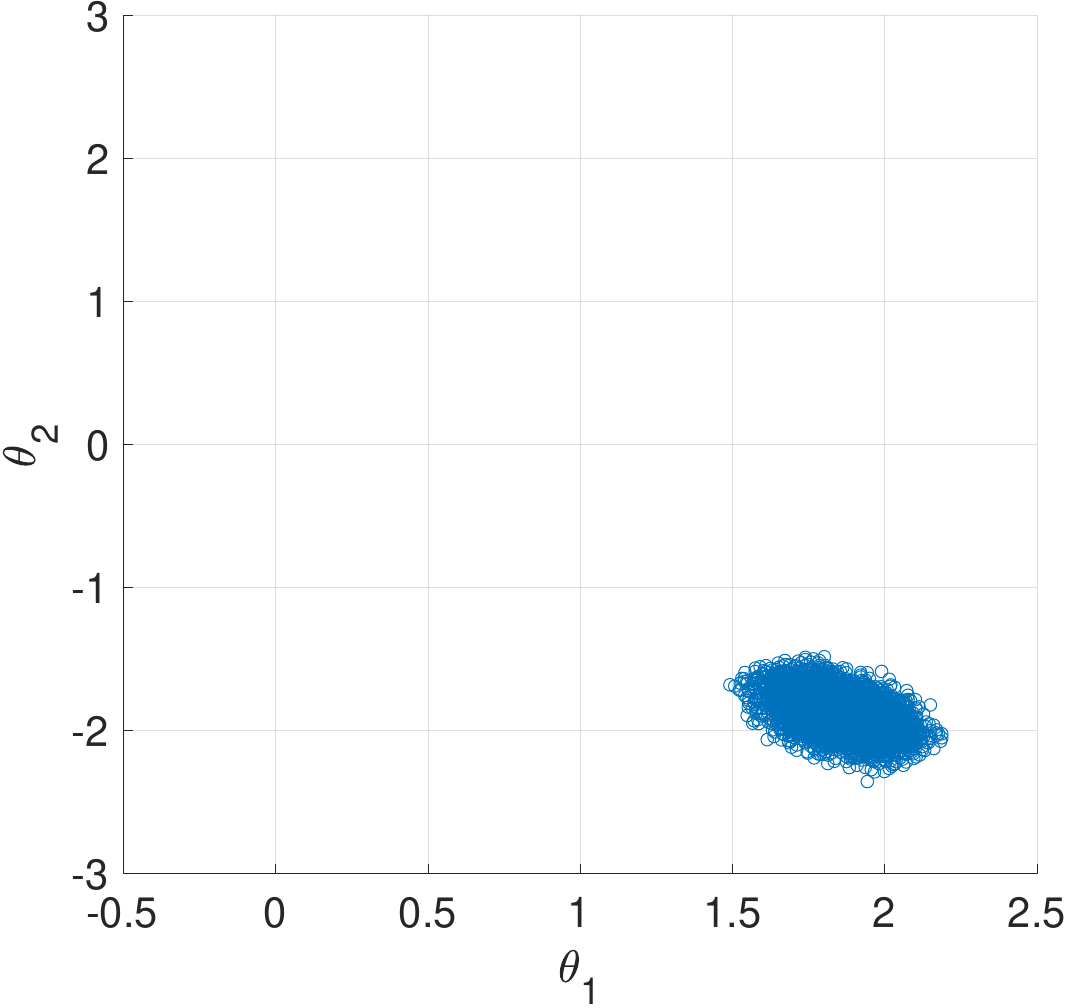}
\subcaption*{Samples of HMC}
\end{subfigure}
\hfill
\begin{subfigure}[t]{0.32\textwidth}
\centering
\includegraphics[width=\textwidth]{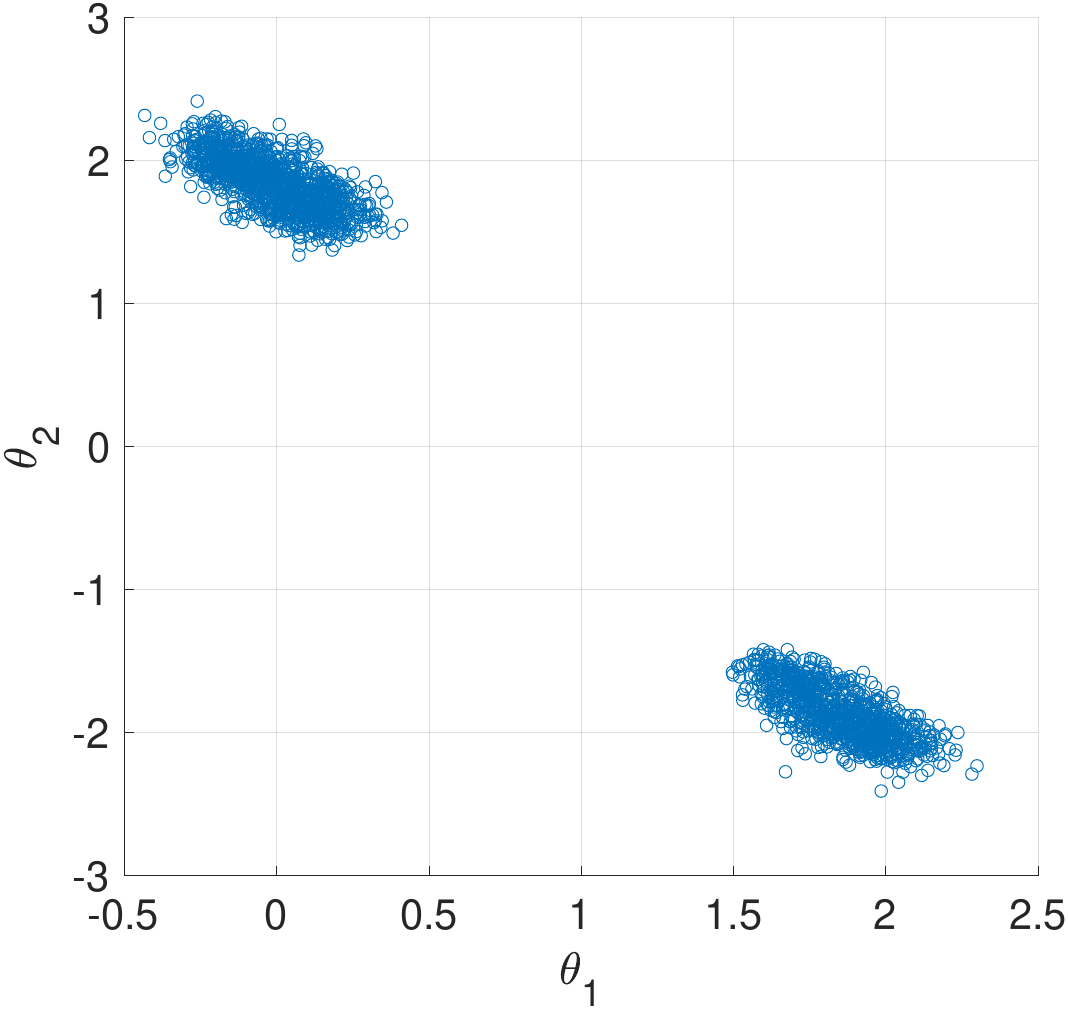}
\subcaption*{Samples of SHMC}
\end{subfigure}
\hfill
\begin{subfigure}[t]{0.32\textwidth}
\centering
\includegraphics[width=\textwidth]{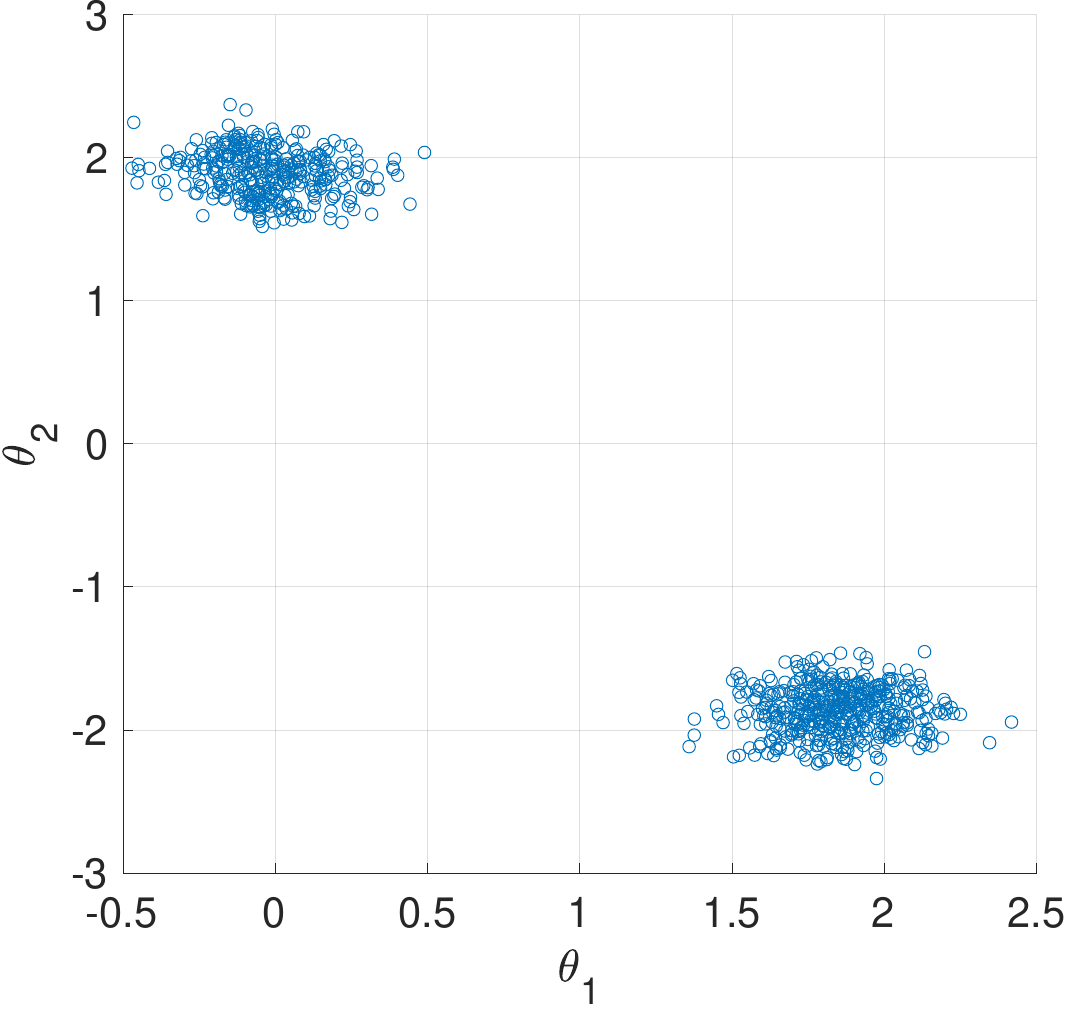}
\subcaption*{Samples of RB-SHMC}
\end{subfigure}
\caption{Samples from the posterior distribution obtained by HMC, SHMC and RB-SHMC with batch size $s = 10$. For all the 3 methods, the time step is fixed to be $\Delta t = 0.01$ and evolution time per iteration is $0.4d_w$.}
\label{fig:samples of HMC, SHMC and SRBHMC}
\end{figure}

Table \ref{table: CPU&acc-2dGMM} shows the average CPU time and acceptance rate of five runs of SHMC and RB-SHMC, with the corresponding standard deviations recorded in the parentheses. 
The CPU time spent in the sampling phase of RB-SHMC is less than $40\%$ of that of SHMC and $t_g$, the CPU time to evaluate the summation term in $\nabla U_1$, of RB-SHMC exhibits a significant advantage over that of SHMC, which clearly shows the benefit of random batch in terms of computational efficiency. 
In the experiment with $\Delta t = 0.01$, the acceptance rate of RB-SHMC is comparatively low since the difference between the exact dynamics and the dynamics using random batch has a relatively large variance when $\Delta t$ is not small enough \cite{li2020random}. 
Nevertheless, the acceptance rate of RB-SHMC is comparable to that of SHMC when the time step is sufficiently small ($\Delta t = 0.001$). 

\begin{table}[htbp]
\centering
\begin{tabular}{|c|c|c|c|c|}
\hline 
  & & $t_g$ (s) & sampling time (s) & acceptance rate \\ 
\hline 
\multirow{2}{*}{$\Delta t = 0.01$} & SHMC & 28.68 (0.4597) & 36.00 (0.5848) & 0.2411 (0.0070) \\ 
\cline{2-5}
 						& RB-SHMC & 6.62 (0.1006) & 14.31 (0.1730) & 0.0856 (0.0018) \\ 
 \hline 
\multirow{2}{*}{$\Delta t = 0.001$} & SHMC & 289.56 (5.3467) & 361.72 (6.8459) & 0.2386 (0.0045) \\ 
\cline{2-5}
						& RB-SHMC & 65.00 (1.0334) & 139.62 (2.7512) & 0.1903 (0.0010) \\ 
\hline 
\end{tabular} 
\caption{The CPU time of evaluating the summation term in $\nabla U_1$, denoted by $t_g$, }the CPU time of the sampling phase and the acceptance rate of SHMC and RB-SHMC. The evolution time per iteration is fixed to be $L\Delta t = 0.4d_w$ and $L = 168$ and $L = 1678$ leapfrog steps are perform respectively for the two choices of $\Delta t$. 
\label{table: CPU&acc-2dGMM}
\end{table}

Finally, we remark that the application of random batch does inject some noise into the original dynamics. As a result, the invariant measure cannot be preserved exactly. In particular, the random batch approximation may not be accurate enough for some complicated problems due to this extra variance by randomization. Hence, we suggest that practitioners exploit the random batch strategy when computational cost is high whereas the demand on accuracy is relatively low. A theoretical understanding of such computational-accuracy trade-off even for general random batch methods will be an interesting research direction to explore in future works.

\subsection{Lennard-Jones fluids}\label{subsec:lj}

Now we apply RB-SHMC to sample from the Gibbs distribution of a three-dimensional interacting particle system whose interaction potential is modelled by the Lennard-Jones potential
\begin{gather}\label{eq:potential_LJ}
\phi(r_i - r_j) = \Phi(r_{ij}) = 4\left[\left(\frac{1}{r_{ij}}\right)^{12} - \left(\frac{1}{r_{ij}}\right)^{6}\right],
\end{gather}
where $r_{ij} = |r_i - r_j|$ is the distance between the two particles located at $r_i$ and $r_j$ in $\bbR^3$. Conventionally, we use periodic boxes with side length $L$ to approximate the fluids in the numerical simulation. Therefore, each particle $i$ interacts with not only all the other particles $j \neq i$ in the system but also their periodic images (including its own periodic image). The corresponding Gibbs distribution is given by
\begin{gather}\label{eq:Gibbs_LJ}
\mu \propto \exp\left[-\frac{\beta}{2} \sum_{n}{'}\sum_{i,j}\phi(r_i - r_j+nL)\right],
\end{gather}
where $\beta=1/T$ is the inverse temperature (all quantities have been scaled according to some basic units so that they are dimensionless) and $n \in \bbZ^3$. Here, we introduce the notation $\sum_n{'}$ to mean that the term with $n = 0$ is excluded from the outer summation if $i = j$.
Note that the Lennard-Jones potential \eqref{eq:potential_LJ} decays rapidly as the distance between the two particles increases so that one may approximate the discrete sum by the continuous integral over $r_{i}, r_{j}: |r_i-r_j+nL|\ge R_c$ for some chosen cutoff $R_c>0$. Thus the pressure of the system can be approximated by
\begin{gather}\label{eq:pressure_for_periodic}
P=\frac{\rho}{\beta}+\frac{8}{V}\sum_{i=1}^N \sum\limits_{\substack{j: j>i,\\r_{ij}^*<R_c}}\left[2 \left(\frac{1}{r_{ij}^*}\right)^{12}-\left(\frac{1}{r_{ij}^*}\right)^{6}\right]+\dfrac{16}{3}\pi\rho^2 \left[\frac{2}{3}\left(\frac{1}{R_c}\right)^9-\left(\frac{1}{R_c}\right)^3\right],
\end{gather}
where $\rho=N/L^3$ is the particle density and $r_{ij}^*=|r_i-r_j+n_{ij}^* L|$ is the distance between particle $i$ and the nearest image of $j$.
One may refer to \cite{frenkel2001understanding, li2020random} for more details.
Moreover, with this cutoff, we only need to consider the interactions between particles and images that have distance less than $R_c$ during our sampling algorithms using RB-SHMC and RBMC.

In this example, we take the cutoff length
\[
R_c=L/2
\]
and compare our proposed RB-SHMC algorithm with RBMC \cite{li2020random}. We use similar settings to those in \cite{li2020random}. Specifically, the system consists of $N = 500$ particles and the length of the periodic boxes is varied with the given density $\rho \in [0, 1]$ by
\begin{gather}
L = \left(\frac{N}{\rho}\right)^{1/3}.
\end{gather}
We adopt a simpler splitting strategy than that in \cite{li2020random}, given by
\begin{gather}\label{eq:LJsplit}
\Phi(r) = \Phi_1(r) + \Phi_2(r),
\end{gather}
with
\begin{eqnarray}\label{eq:LJ_u1}
\Phi_1(r) = \begin{cases}
					-2^{-1/6}r, & 0 < r < 2^{1/6},\\
					4\left(r^{-12} - r^{-6}\right), & r \geq  2^{1/6},
			\end{cases}
\end{eqnarray}
and
\begin{eqnarray}\label{eq:LJ_u2}
\Phi_2(r) = \begin{cases}
					4\left(r^{-12} - r^{-6}\right) +2^{-1/6}r & 0 < r <  2^{1/6},\\
					0, & r \geq  2^{1/6}.
			\end{cases}
\end{eqnarray}

\begin{figure}
\centering
\includegraphics[width=0.6\textwidth]{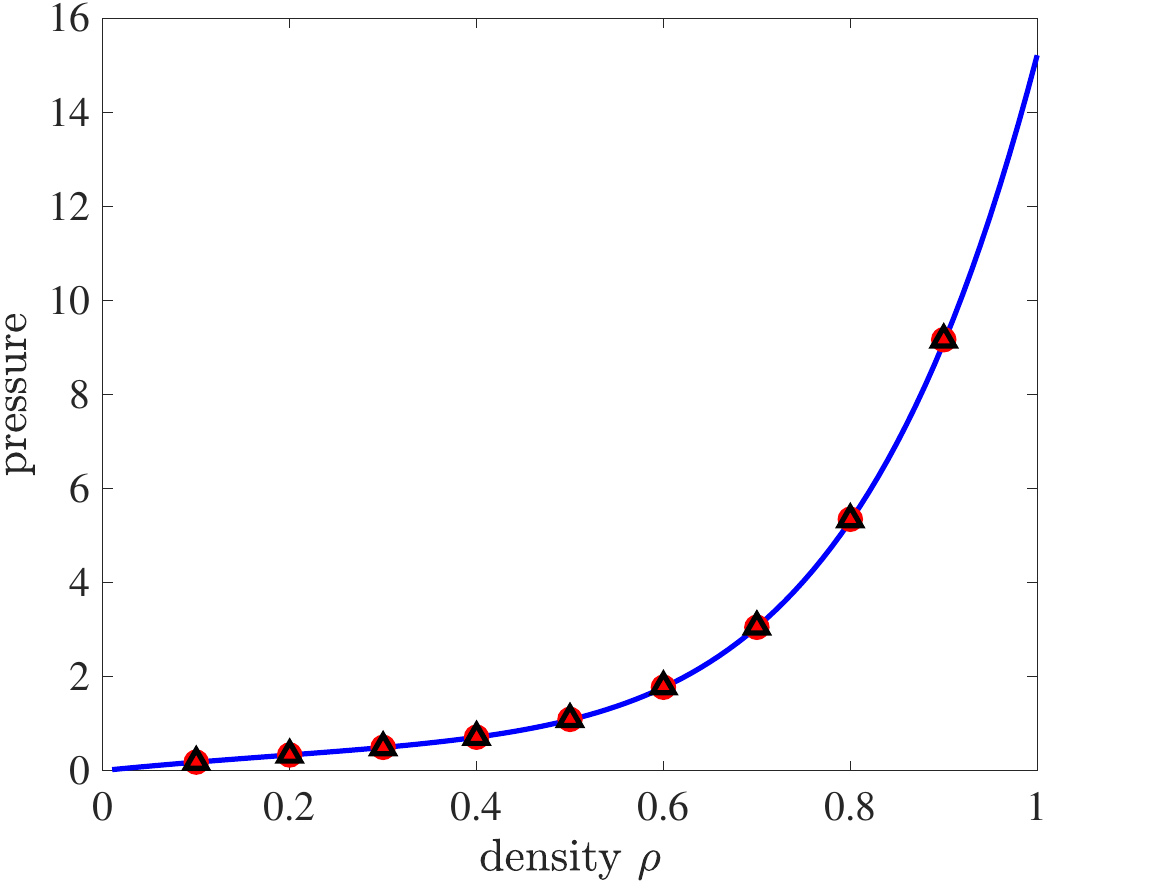}
\caption{The pressure obtained by $10^7$ iterations of RB-SHMC (red dots) and RBMC (black triangles) respectively. The samples are collected after the burn-in phase, defined as the first $2\times 10^5$ iterations of the two methods. The blue curve is the reference solution given by the fitting curve in \cite{johnson1993lennard}.}
\label{fig:pressure}
\end{figure}

In the experiment, we let the number of leapfrog steps and the timestep for RB-SHMC gradually decrease. Specifically, the number of leapfrog steps is chosen to be \[
L_n = \max\left\{\left\lceil \frac{\log2\cdot L_1}{\log(1+k_n)}\right\rceil, L_1 - k_n + 1, 3 \right\}
\]
with $k_n = \left\lceil \frac{n}{2000}\right\rceil$ and the initial value $L_1 = 20$. By this strategy, $L_n$ decreases as $21-n/2000$ in the earlier stage of iterations (roughly when $n \leq 3.2 \times 10^{4}$) and as $20 \log2/\log(1+n/2000)$ in the later stage (roughly when $n \geq 3.2 \times 10^{4}$). Moreover, we make sure $L_n \geq 3$ so that the displacement is not too small.
The timestep is chosen to be $\Delta t_n = \Delta t_1 n^{-\gamma}$ with the initial step size $\Delta t_1 = 0.2$ and the decaying exponent $\gamma = 0.06$. 
Following \cite{li2020random}, the number of discretization steps and timestep size of RBMC are fixed at $L_n \equiv 9$ and $\Delta t_n \equiv 0.01$ in each iteration. For both RB-SHMC and RBMC, the batch size is chosen to be $s = 1$ and the first $2\times 10^5$ iterations are taken to be the burn-in phase. Only the samples generated after the burn-in phase are collected. The pressure obtained over $M$ iterations is computed in the following way: we first compute the pressure \eqref{eq:pressure_for_periodic} of each sample configuration collected after the burn-in phase, and then we take the sample average of the pressures over these $M$ iterations.
It can be seen from Figure \ref{fig:pressure} that the pressures obtained by $10^7$ iterations of both RB-SHMC (red dots) and RBMC (black triangles) capture the reference solution (blue curve) given by the fitting curve provided in \cite{johnson1993lennard} well. 

Following \cite{li2020random}, the numerical results are evaluated by the relative error in the $\ell^2$ norm: 
\begin{gather}\label{eq:error_LJ}
\left.\sqrt{\sum\limits_{i=1}\limits^{9}\frac{1}{9}(P_i-P(\rho_i))^2} \middle/\sqrt{\sum_{i=1}^9 \frac{1}{9}P(\rho_i)^2}\right.,
\end{gather}
where $P_i$ denotes the pressure at different density values $\rho =\rho_i$ (here we choose 9 different density values as shown in Figure \ref{fig:pressure}) and $P(\cdot)$ represents the reference solution given by the fitting curves in \cite{johnson1993lennard}. Figure \ref{fig:LJ_error} presents how the relative errors of RB-SHMC and RBMC vary over the CPU time used. Here, the value of the pressure $P_i$ used to compute the relative error \eqref{eq:error_LJ} is averaged over the pressures generated by 10 independent chains to further reduce the impact of randomness. One can clearly see that both the relative errors of RB-SHMC and RBMC quickly decrease to below $1\%$ and the relative error of RB-SHMC decays even faster than that of RBMC. \\

\begin{figure}
\centering
\includegraphics[width=0.6\textwidth]{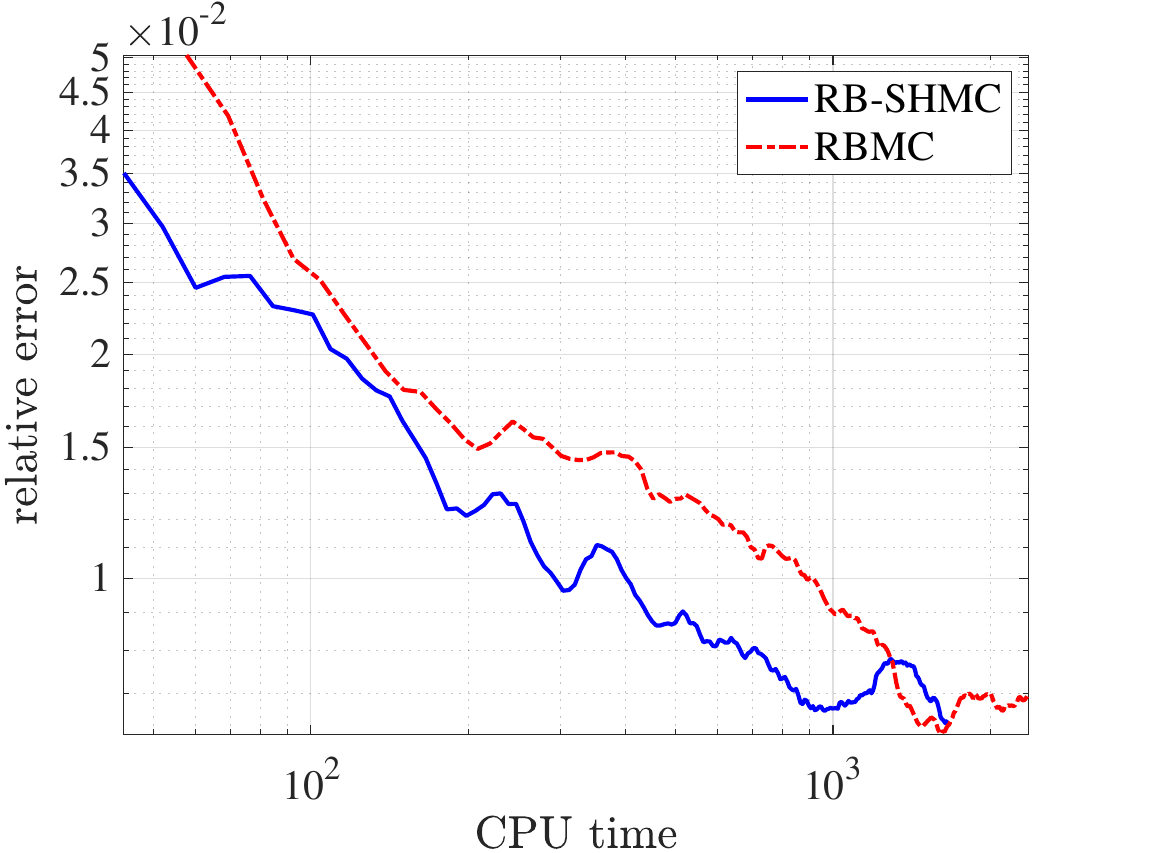}
\caption{The relative $\ell^2$ error of the pressures obtained by RB-SHMC (blue curve) decays faster than that of RBMC (red dashed curve). The value of the relative error \eqref{eq:error_LJ} is computed by the pressure averaged over 10 independent chains.}
\label{fig:LJ_error}
\end{figure}

\begin{remark}
The three examples arising from interacting particle systems (Example \ref{subsec:test examples}, Example \ref{subsec:DBm} and Example \ref{subsec:lj}) clearly exhibit the benefits of RB-SHMC. First, for an interacting particle system with $N$ particles, the application of the random batch strategy efficiently reduces the computational cost per timestep from $\cO(N)$ to $\cO(1)$. 
A second benefit of RB-SHMC comes from the splitting strategy. Note that the interaction potentials in the examples of Dyson Brownian motion and Lennard-Jones fluids are singular, popular gradient-based sampling methods, such as SGLD and standard HMC, are doomed to fail. 
Moreover, a clever splitting scheme can further reduce the computational cost. Recall that we perform the splitting by truncating the pairwise interaction potential at a small cutoff radius $r = r_c$ so that the number of particles within the range of $r_c$ to the current particle is $\cO(1)$ and thus the computational cost of the Metropolis rejection step is reduced from $\cO(N)$ to $\cO(1)$.
\end{remark}

\section*{Acknowledgements}

The authors would like to thank two anonymous referees for valuable comments which helped to improve our paper. The work of L. Li was partially supported by the National Key R\&D Program of China No. 2020YFA0712000, NSFC Grant No. 11901389, 12031013, and Shanghai Science and Technology Commission Grant No. 20JC144100, 21JC1403700.
The work of L. Liu was partially supported by Shanghai Pujiang Program Research Grant No.20PJ1408900, Institute of Modern Analysis -- A Shanghai Frontier Research Center Grant No. ZXWH2071101/002, Shanghai Municipal Science and Technology Major Project No.2021SHZDZX0102, Shanghai Science and Technology Commission Grant No.21ZR1431000 \& 21JC1402900, NSFC Grant No.12101397 \& 12090024.

\appendix
\section{Proof of Lemma \ref{lem:E P^q}}\label{apdx:pf lem E P^q}

\begin{proof}
Denote $C_{q}$ as a constant depending on $q$ and $C_{q}$ may change from line to line. For any $t\in [0, T]$,
\begin{gather}
\begin{split}
\frac{d}{dt} \mathbb{E}\left(\left|\tbp(t)\right|^q\middle| \mathcal{F}_{L}\right) 
= & \ q \mathbb{E}\left(\left|\tbp(t)\right|^{q-1} \left\vert -\nabla V_1\left(\tbx(t)\right) + \frac{1}{s}\sum_{j\in\xi_{L_t}}\nabla \psi_j\left(\tbx\right)\right\vert \ \middle| \mathcal{F}_{L}\right) \\
\leq & \ q\left(\|\nabla V_1\|_{\infty} + \|\nabla \psi\|_{\infty} \right)\mathbb{E}\left(\left|\tbp(t)\right|^{q-1}\middle| \mathcal{F}_{L}\right) \\
\leq & \ C_q \mathbb{E}\left(\left|\tbp(t)\right|^{q}\middle| \mathcal{F}_{L}\right)^{\frac{q-1}{q}}
\end{split}
\end{gather}
where the last line follows from H\"{o}lder's inequality. By integration, one has
\begin{gather}
\mathbb{E}\left(\left|\tbp(t)\right|^q\middle| \mathcal{F}_{L}\right)
\leq C_q\left(\left|\tbp(0)\right|^q + t^q\right)
\leq C_q\left(\left|\tbp(0)\right|^q + T^q\right),
\end{gather}
and thus equation \eqref{eq:E P^q-global} follows from taking expectation w.r.t. the randomness in $\mathcal{F}_{L}$ on both sides and setting $q = 4$.
\end{proof}

\bibliographystyle{siam}  
\bibliography{SHMCrb_ref}

\end{document}